\documentclass[12pt]{amsart}

\usepackage{etex}
\usepackage{amsmath, amssymb}
\usepackage{array}
\usepackage[frame,cmtip,arrow,matrix,line,graph,curve]{xy}
\usepackage{graphpap, color, paralist}
\usepackage[mathscr]{eucal}
\usepackage[pdftex]{graphicx}
\usepackage[pdftex,colorlinks,backref=page,citecolor=blue]{hyperref}
\usepackage{tikz}
\usetikzlibrary{matrix,arrows,decorations.pathmorphing}

\setlength{\oddsidemargin}{0in}
\setlength{\evensidemargin}{0in}
\setlength{\marginparwidth}{0in}
\setlength{\marginparsep}{0in}
\setlength{\marginparpush}{0in}
\setlength{\topmargin}{0in}
\setlength{\headheight}{10pt}
\setlength{\headsep}{12pt}
\setlength{\footskip}{0.3in}
\setlength{\textheight}{8.8in}
\setlength{\textwidth}{6.5in}
\setlength{\parskip}{4pt}
\linespread{1.0}

\newcolumntype{P}[1]{>{\centering\arraybackslash}p{#1}}

\allowdisplaybreaks

\newtheorem{theorem}{Theorem}[section]
\newtheorem{lemma}[theorem]{Lemma}
\newtheorem{proposition}[theorem]{Proposition}

\theoremstyle{definition}

\newtheorem{remark}[theorem]{Remark}

\newtheorem{example}[theorem]{Example}
\newtheorem{definition}[theorem]{Definition}

\newcommand{\CC}{\mathbb{C}}

\newcommand{\PP}{\mathbb{P}}
\newcommand{\ZZ}{\mathbb{Z}}
\newcommand{\rH}{\mathrm{H}}
\newcommand{\rM}{\mathrm{M}}

\newcommand{\cA}{\mathcal{A}}
\newcommand{\cB}{\mathcal{B}}
\newcommand{\cC}{\mathcal{C}}

\newcommand{\cJ}{\mathcal{J}}
\newcommand{\cK}{\mathcal{K}}
\newcommand{\cL}{\mathcal{L}}

\newcommand{\cN}{\mathcal{N}}
\newcommand{\cO}{\mathcal{O}}
\newcommand{\cP}{\mathcal{P}}
\newcommand{\cR}{\mathcal{R}}

\newcommand{\cT}{\mathcal{T}}
\newcommand{\cV}{\mathcal{V}}
\newcommand{\cW}{\mathcal{W}}
\newcommand{\cX}{\mathcal{X}}
\newcommand{\cY}{\mathcal{Y}}
\newcommand{\cZ}{\mathcal{Z}}
\newcommand{\bJ}{\mathbf{J}}
\newcommand{\bK}{\mathbf{K}}
\newcommand{\bM}{\mathbf{M}}
\newcommand{\bP}{\mathbf{P}}
\newcommand{\bU}{\mathbf{U}}

\newcommand{\bX}{\mathbf{X}}

\newcommand{\bp}{\mathbf{p}}

\newcommand{\NS}{\mathrm{NS}}

\newcommand{\SL}{\mathrm{SL}}

\newcommand{\git}{/\!/}

\tikzset{
    partial ellipse/.style args={#1:#2:#3}{
        insert path={+ (#1:#3) arc (#1:#2:#3)}
    }
}

\setcounter{tocdepth}{1}
\makeatletter
\def\l@subsection{\@tocline{2}{0pt}{2.5pc}{5pc}{}}
\makeatother

\author{Han-Bom Moon}
\address{Department of Mathematics, Fordham University, New York, NY 10023}
\email{hmoon8@fordham.edu}

\author{Luca Schaffler}
\address{Department of Mathematics, KTH Royal Institute of Technology, SE-100 44 Stockholm, Sweden}
\email{lucsch@math.kth.se}

\title[KSBA compactification of moduli of K3 surfaces]{KSBA compactification of the moduli space of K3 surfaces with purely non-symplectic automorphism of order four}

\begin{document}

\thanks{\textit{Mathematics Subject Classification (2010)}: 14J10, 14J28, 14D06.}

\maketitle

\begin{abstract}
We describe a compactification by KSBA stable pairs of the five-dimensional moduli space of K3 surfaces with purely non-symplectic automorphism of order four and $U(2)\oplus D_4^{\oplus2}$ lattice polarization. These K3 surfaces can be realized as the minimal resolution of the double cover of $\PP^1\times\PP^1$ branched along a specific $(4,4)$ curve. We show that, up to a finite group action, this stable pair compactification is isomorphic to Kirwan's partial desingularization of the GIT quotient $(\PP^1)^8\git\SL_2$ with the symmetric linearization.
\end{abstract}

%---------------------------------------------------------------------------------------------------------------------------------------------------

\section{Introduction}

Recent advances in algebraic geometry including the minimal model program and the boundedness for stable pairs, enables us to investigate compactifications of moduli spaces of higher dimensional algebraic varieties, in particular the Koll\'ar, Shepherd-Barron, and Alexeev (\emph{KSBA}) moduli space of stable pairs \cite{KSB88, Ale96, Kol18}. However, the geometry of the moduli spaces of higher dimensional varieties is extremely complicated. For instance, these moduli spaces are rarely irreducible, and they may even have arbitrary singularity types \cite{Mne85, Vak06}. 

Nonetheless, sometimes we may understand in detail the geometry of moduli spaces of special algebraic varieties of interest. These explicit moduli spaces are beneficial because in many cases the generalities are out of reach, and also they reveal interesting geometric behaviors (see \cite{AP09, Sch16, GMGZ18, AB19, AET19, DH21}). In this paper, we study one of such explicit examples: the moduli space of K3 surfaces with a purely non-symplectic automorphism of order four (see Definition~\ref{definitionofpurelynonsymplecticautomorphism}). The geometry of K3 surfaces with purely non-symplectic automorphism of order four was studied in \cite{AS15}, and for a survey on the subject in general we refer to \cite{Zha07}.

Here we state our main result. For the precise definitions and terminology, see \S\,\ref{sec:K3eightpoints}. Consider the moduli space of K3 surfaces $\widetilde{X}$ with a purely non-symplectic automorphism of order four together with a $U(2) \oplus D_{4}^{\oplus 2}$ lattice polarization. By Kond\=o's work \cite{Kon07}, there is a five dimensional irreducible moduli space $\bM$ of such K3 surfaces. These K3 surfaces can be obtained by taking the minimal resolution of the double cover $X$ of $\PP^{1} \times \PP^{1}$ branched along an appropriate divisor $B$ of class $(4,4)$. 

We adopt the KSBA theory to compactify $\bM$. Let $\overline{\bK}$ be the normalization of the closure in the KSBA moduli space of stable pairs of the locus parametrizing $(X, \epsilon R)$ where $X$ is the double cover of $\PP^{1} \times \PP^{1}$ branched along $B$, $R$ is the ramification divisor, and $0<\epsilon\ll1$. 

\begin{theorem}\label{thm:mainthm}
The KSBA compactification $\overline{\bK}$ is isomorphic to $\bP/H$ where $H \cong (S_{4} \times S_{4}) \rtimes S_{2}$ and $\bP$ is the partial desingularization of the GIT quotient $(\PP^1)^8\git\SL_2$ with the symmetric linearization. 
\end{theorem}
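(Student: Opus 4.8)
The plan is to first make explicit the correspondence between our K3 surfaces and configurations of eight points, and then to upgrade the resulting birational map to an isomorphism by matching boundary strata and invoking normality. Since the order-four automorphism $\sigma$ squares to the covering involution of $X \to \pp$, it descends to an involution $\overline{\sigma}$ of $\pp$ preserving the branch curve $B$. Choosing coordinates so that $\overline{\sigma}$ acts on the second factor by $(y_0:y_1) \mapsto (y_0:-y_1)$, the compatibility needed for $\sigma$ to have order four forces the defining bidegree $(4,4)$ equation to be odd in $y_1$, hence of the shape
\begin{equation*}
B = \{y_0 = 0\} \cup \{y_1 = 0\} \cup \{y_0^2\, a(x) + y_1^2\, b(x) = 0\},
\end{equation*}
with $a,b$ binary quartics in $x=(x_0:x_1)$. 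The residual $(4,2)$ component is a double cover of $\PP^1_x$ branched exactly over the eight points $V(a) \sqcup V(b)$, split canonically into two groups of four. (The eight nodes of $B$ produce eight $A_1$ singularities on $X$, whose exceptional classes together with the ramification over the two lines and the two rulings generate the overlattice $U(2)\oplus D_4^{\oplus 2}$; this is carried out in \S\ref{sec:K3eightpoints}.) Reading off the unordered configuration $V(a)\sqcup V(b)$ identifies $\bM$ birationally with $\big((\PP^1)^8 \git \SL_2\big)/H$, where the two copies of $S_4$ permute the roots of $a$ and of $b$ and the outer $S_2$ swaps $a \leftrightarrow b$ via $y_0 \leftrightarrow y_1$; this last symmetry is what forces the symmetric linearization. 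A dimension count ($8 - \dim\PGL_2 = 5 = \dim\bM$) shows the identification is dominant and generically finite, hence birational.

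Both sides are normal and projective. The space $\overline{\bK}$ is normal by construction and projective as a closed subscheme of the projective KSBA moduli space, while $\bP$ is a projective normal variety, being Kirwan's partial desingularization of a projective GIT quotient; since $H$ is finite, $\bP/H$ inherits normality and projectivity. Because $\bP \to (\PP^1)^8\git\SL_2$ is an isomorphism over the stable locus, the birational identification above lifts to a birational identification between $\bP/H$ and $\overline{\bK}$, and it suffices to prove that it, or its inverse, is an isomorphism.

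To produce a morphism I would work upstairs on $\bP$. Over the stable locus the double cover recipe yields a flat family of stable pairs $(X,\epsilon R)$, hence a morphism to $\overline{\bK}$; since $H$-related configurations produce isomorphic pairs, it is $H$-invariant, and the task is to extend the family of stable pairs across the exceptional loci of the Kirwan blow-ups. Because Kirwan only modifies the strictly semistable locus---here the configurations in which one of the two groups of four points has collided to a single point, together with its further degenerations---the extension is a local problem along those loci. The crux of the whole argument, and the step I expect to be the main obstacle, is the boundary matching: for each orbit type of non-stable configuration I would run KSBA stable reduction on a one-parameter smoothing and identify the limiting slc pair $(X_0,\epsilon R_0)$, then verify that its isomorphism class records exactly the data carried by the corresponding point of $\bP$. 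The delicate cases are the strictly semistable configurations, where the plain GIT quotient collapses genuinely different degenerations to a single point while both the Kirwan exceptional divisor and the KSBA limit separate them; one must show that $X_0$ becomes a reducible or non-normal stable surface whose extra moduli---the attaching and polarization data of the new components---are parametrized precisely by Kirwan's exceptional divisor. I would organize this by stratifying $\bP$ according to the Luna-type data of the configurations and using the explicit equation of $B$ to read off the degenerate double cover on each stratum.

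Granting the boundary matching, the extended map $\bP \to \overline{\bK}$ is a surjective, $H$-invariant morphism with finite fibers, and its descent $\bP/H \to \overline{\bK}$ is a bijection on closed points refining the birational identification of the first paragraph. A proper birational quasi-finite morphism onto a normal variety is an isomorphism, so $\bP/H \to \overline{\bK}$ is an isomorphism, which is the assertion of Theorem \ref{thm:mainthm}.
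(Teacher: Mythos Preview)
Your strategy is the paper's strategy: build a family of stable pairs over $\bX_1^s$, descend the classifying map to $\bP/H$, and conclude by Zariski's Main Theorem. Two points, however, deserve correction or comment.

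First, your description of the strictly semistable locus is wrong in a way that would derail the boundary analysis. The Kirwan center is the locus $\Delta_{4,4}=\bigcup_{I}\Delta_{I,I^c}$ where \emph{some} four points collide and the complementary four also collide; there are $35$ components, indexed by $4$-subsets $I\subset\{1,\dots,8\}$, and $I$ need not respect the partition $\{1,2,3,4\}\sqcup\{5,6,7,8\}$. Up to the $H$-action there are three types according to $|I\cap\{1,2,3,4\}|\in\{2,3,4\}$ (the paper's \textsf{type a}, \textsf{b}, \textsf{c}), and each requires its own stable-reduction computation (\S\ref{stablereplacementtypea}--\S\ref{stablereplacementtypec}): ordinary blow-ups of double lines for \textsf{a}, point blow-ups for \textsf{b}, and a weighted blow-up followed by a flip for \textsf{c}. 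Your description (``one of the two groups of four points has collided'') covers only \textsf{type c}.

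Second, the paper does not carry out the stable reductions on the K3 side. It first proves (Proposition~\ref{compactificationsarethesame}, via the theory of abelian covers) that $\overline{\bK}\cong\overline{\bJ}$, the analogous compactification for the base pairs $\big(\PP^1\times\PP^1,\tfrac{1+\epsilon}{2}B\big)$, and then does all the degeneration analysis downstairs on $\PP^1\times\PP^1$. This is a substantial simplification you should adopt. Finally, the paper does not establish bijectivity on closed points; to get finiteness of $\bP/H\to\overline{\bJ}$ it only checks, via a cross-ratio argument (Lemma~\ref{finaltechnicaldetail}), that none of the exceptional divisors $\overline{E}_{I,I^c}\cong\PP^2\times\PP^2$ is contracted.
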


Note that $\bP$ has another moduli theoretic interpretation: $\bP$ is isomorphic to the Hassett's moduli space of weighted pointed curves $\overline{\rM}_{0,\left(\frac{1}{4}+\epsilon\right)^8}$ \cite[Theorem 1.1]{KM11}.

\subsection{K3 surfaces from eight points on $\PP^{1}$ and GIT}

Here we elaborate more on the construction of these K3 surfaces. Fix eight distinct points $([\lambda_1:1],\ldots,[\lambda_8:1])$ on $\PP^1$. Let $[x_{0}:x_{1}], [y_{0}:y_{1}]$ be the homogeneous coordinates of $\PP^{1} \times \PP^{1}$, and define $B$ to be the following curve of the class $(4, 4)$:
\[
	y_{0}y_{1}\left(y_0^2\prod_{i=1}^4(x_0-\lambda_ix_1)+
	y_1^2\prod_{i=5}^8(x_0-\lambda_ix_1)\right)=0.
\]
Let $X$ be the double cover of $\PP^{1} \times \PP^{1}$ branched along $B$. In \cite{Kon07}, it was shown that 1) the minimal resolution $\widetilde{X}$ of $X$ is a K3 surface with a purely non-symplectic automorphism of order four, so $\bU \subseteq (\PP^{1})^{8}$ parametrizing eight distinct points also parametrizes K3 surfaces, 2) the construction is $\SL_{2}$-invariant and $S_{8}$-invariant (see \cite[\S\,2.1]{Kon07} and \S\,\ref{S8actionandisok3s} of this paper), so $\bM = \bU/\SL_{2}/S_{8}$ can be regarded as a parameter space of such K3 surfaces. The involution $([x_0:x_1],[y_0:y_1])\mapsto([x_0:x_1],[y_0:-y_1])$ is lifted to a purely non-symplectic automorphism $\sigma$ of order four on $\widetilde{X}$.

Let $\rH^{2}(\widetilde{X}, \ZZ)^{+}$ be the invariant subspace of $\rH^{2}(\widetilde{X}, \ZZ)$ with respect to the $(\sigma^2)^*$-action. Then for any $\bp \in \bM$, the associated $\rH^{2}(\widetilde{X}, \ZZ)^{+}$ is a primitive sublattice of $\NS(\widetilde{X})$ isometric to $U(2) \oplus D_{4}^{\oplus 2}$. In summary, the GIT quotient $(\PP^{1})^{8}\git \SL_{2}/S_{8}$ with the symmetric linearization can be thought of as a compactification of the moduli space of K3 surfaces in analysis. 

\subsection{KSBA compactification}

To adopt the KSBA theory in this context, one has to choose an ample divisor $A$ on $\widetilde{X}$ and make a pair $(\widetilde{X}, A)$. However, we make two minor modifications to the moduli problem we consider. First of all, instead of taking an ample divisor $A$, we choose a big and nef divisor on $\widetilde{X}$, which is the pull-back of the ramification divisor $R$ on $X$. This makes the description of the parameter space more accessible by using the theory of abelian covers \cite{AP12}. Secondly, instead of taking the entire linear system of $R$, we just take $R$ to make a five-dimensional moduli space of pairs.

Technically, the resulting moduli space $\overline{\bK}$ is a compactification of a finite cover of $\bM$ because the same K3 surface with a purely non-symplectic automorphism of order four may be regarded as a minimal resolution of the double cover of $\PP^{1} \times \PP^{1}$ in several ways. However, compared to other ways of compactifying $\bM$, the compactification $\overline{\bK}$ has the merit to have an explicit moduli theoretic meaning. 

\subsection{Relation to Hodge theoretic compactifications}

Hodge theory is another standard tool one could use to compactify a given moduli space of varieties. Here we leave some related works. In \cite{DM86}, Deligne and Mostow proved that the symmetric GIT quotient $(\PP^1)^8\git\SL_2$ is isomorphic to the Satake--Baily--Borel compactification $\overline{\cB/\Gamma}^*$ of the quotient of a five-dimensional complex ball $\cB$ by an arithmetic group $\Gamma$. This is proved using periods of a family of curves arising as the $\mathbb{Z}/4\mathbb{Z}$-covers of $\PP^1$ branched along eight points. 

There are two interesting problems. One may wonder how GIT compactifications and Hodge theoretic compactifications are related. In the literature, one celebrated example of interaction between Hodge theory and GIT is the case of K3 surfaces with a degree two polarization. More precisely, in \cite{Loo86} it is shown that a small partial resolution of the Satake--Baily--Borel compactification for such K3 surfaces is isomorphic to a partial Kirwan desingularization of the GIT quotient for sextic plane curves. The moduli space of K3 surfaces which are double covers of $\PP^1\times\PP^1$ branched along a curve of class $(4,4)$ were recently investigated in \cite{LO18} from the point of view of GIT and Hodge theory. In our context of moduli space of K3 surfaces with purely non-symplectic automorphism of order four, the GIT compactification and the Satake--Baily--Borel compactification give the same answer after quotienting by $S_8$ \cite{Kon07}. 

Usually, GIT and Hodge theoretic compactifications do not have a strong modular interpretation. Thus we have the second interesting problem: finding a modular compactification of the given moduli space. From the perspective of moduli theory, the KSBA compactification is arguably the best known theoretical approach. Motivated by this, in \cite{GKS21} it is shown that the KSBA compactification $\overline{\bK}$ is isomorphic to the quotient by $(S_4\times S_4)\rtimes S_2$ of the unique toroidal compactification of the above Deligne--Mostow ball quotient $\mathcal{B}/\Gamma$.

\subsection{Structure of the paper}

The paper is organized as follows. In \S\,\ref{sec:K3eightpoints} we review Kond\=o's construction of the $5$-dimensional family of K3 surfaces with purely non-symplectic automorphism of order four and $U(2) \oplus D_{4}^{\oplus 2}$ lattice polarization. In \S\,\ref{stablepairsandKSBAcompactification} we recall the notion of stable pair, their moduli functor, and the theory of abelian covers. \S\,\ref{partialdesingularizationofGITquotients} contains a brief summary of Kirwan's partial desingularization \cite{Kir85}, which is applied to the case of $(\mathbb{P}^1)^8\git\SL_2$. In \S\,\ref{explicitcalculationofstablereplacement} we study the KSBA limits of specific one-parameter degenerations of stable pairs $(\PP^1\times\PP^1,\frac{1+\epsilon}{2}B)$. These calculations are then used in \S\,\ref{proofofmainthm} to finally prove Theorem~\ref{thm:mainthm}. 

We work over $\mathbb{C}$.

%---------------------------------------------------------------------------------------------------------------------------------------------------

\section*{Aknowledgements}
We would like to express our gratitude to Valery Alexeev, Maksym Fedorchuk, Paul Hacking, Giovanni Inchiostro, Jenia Tevelev, and Alan Thompson for helpful conversations and suggestions. We also thank the anonymous referee for the valuable comments. Most part of this work was done while the first author was a member of the Institute for Advanced Study. The first author was partially supported by the Minerva Research Foundation.

%------------------------------------------------------------------------

\section{K3 surfaces from eight points on $\PP^1$}\label{sec:K3eightpoints}

%-------------------------------------------------------------------------

\subsection{Kond\=o's construction}
\label{kondo'sconstruction}

\begin{definition}
A normal surface $X$ is called an \emph{ADE K3 surface} if its minimal resolution is a smooth K3 surface, or equivalently, 
\begin{enumerate}
\item $X$ has only ADE singularities (thus it is Gorenstein);
\item $\omega_{X} \cong \cO_{X}$;
\item $\rH^{1}(X, \cO_{X}) = 0$. 
\end{enumerate}
\end{definition}

In \cite{Kon07}, a K3 surface is constructed from the data of eight distinct points on $\PP^1$ as follows. Up to the natural $\SL_2$-action on $\PP^1$, we may assume that the eight points are in the form $[\lambda_1:1],\ldots,[\lambda_8:1]$. Let $C$ be the curve in $\PP^1\times\PP^1$ given by
\begin{equation}\label{eqn:C}
	y_0^2\prod_{i=1}^4(x_0-\lambda_ix_1)+
	y_1^2\prod_{i=5}^8(x_0-\lambda_ix_1)=0,
\end{equation}
where $([x_0:x_1],[y_0:y_1])$ are coordinates in $\PP^1\times\PP^1$. If all $\lambda_{i}$'s are distinct, $C$ is a smooth curve. Let $L_i$ be the line $y_i=0$, $i=1,2$. The double cover $\pi : X \to \PP^1 \times \PP^1$ branched along $C+L_0+L_1$, which has bidegree $(4,4)$, has eight $A_1$ singularities which lie above $C\cap L_0$ and $C\cap L_1$. The minimal resolution $\rho : \widetilde{X} \to X$ of this double cover is a K3 surface. Thus $X$ is an ADE K3 surface with a polarization $F := \pi^{*}\cO(1, 1)$ of degree $4$. Also $\widetilde{X}$ carries a natural big and nef polarization $\rho^{*}F$.

\begin{definition}
\label{definitionofpurelynonsymplecticautomorphism}
An automorphism $\sigma$ of a K3 surface is \emph{non-symplectic} if the induced automorphism on the global sections of the canonical sheaf is not the identity. In addition, we say that $\sigma$ is \emph{purely} non-symplectic if all its non-trivial powers are non-symplectic.
\end{definition}

As Kond\=o described in \cite[\S\,2]{Kon07}, a K3 surface $\widetilde{X}$ as above admits a purely non-symplectic automorphism $\sigma$ of order four given by the lift of the involution 
\[
	([x_{0}:x_{1}], [y_{0}:y_{1}]) \mapsto 
	([x_{0}:x_{1}], [y_{0}:-y_{1}]).
\]
The lattice $\rH^{2}(\widetilde{X}, \ZZ)^{+} := \{x \in \rH^{2}(\widetilde{X}, \ZZ)\;|\; (\sigma^{2})^{*}(x) = x\}$, which embeds primitively into $\NS(\widetilde{X})$, is isometric to $U(2) \oplus D_{4}^{\oplus 2}$, and $\rH^{2}(\widetilde{X}, \ZZ)^{-} := \{x \in \rH^{2}(\widetilde{X}, \ZZ)\;|\; (\sigma^{2})^{*}(x) = -x\}$ is isometric to $U \oplus U(2) \oplus D_{4}^{\oplus 2}$ \cite[Lemma 2.2]{Kon07}. 

\begin{definition}\label{def:moduliM}
Let $\bM$ be the coarse moduli space of K3 surfaces $\widetilde{X}$ with a purely non-symplectic automorphism of order four such that $\rH^{2}(\widetilde{X}, \ZZ)^{+}$ is isometric to $U(2) \oplus D_{4}^{\oplus 2}$. 
\end{definition}

Once $M := \rH^{2}(\widetilde{X}, \ZZ)^{+}$ is identified with $U(2) \oplus D_{4}^{\oplus 2}$, the lattice $N := \rH^{2}(\widetilde{X}, \ZZ)^{-}\cong U\oplus U(2) \oplus D_{4}^{\oplus 2}$ is given because $N = M^{\perp}$. On $N \otimes \CC$, the linear map $\sigma^{*}$ has minimal polynomial $x^2+1$, which implies that $\sigma^*$ is diagonalizable. Moreover, the only possible eigenvalues of $\sigma^*$ are $\pm \sqrt{-1}$. These both occur with the same multiplicity because $\sigma^*$ is a real operator. Hence, if $V$ denotes the eigenspace for $\sqrt{-1}$, then $\dim V = \frac{1}{2}\dim N \otimes \CC = 6$. Thus from \cite[\S\,11]{DK07} (see also \cite[\S\,1]{AS15}), $\bM$ is the quotient of the ball $\{[z]\in\PP(V)\mid z\cdot\overline{z}>0\}$ by an appropriate arithmetic group. In particular, it is a 5-dimensional irreducible analytic variety. 

\begin{remark}
Note that the family of K3 surfaces parametrized by $\bM$ in Definition~\ref{def:moduliM} is the same as the family of K3 surfaces with a purely non-symplectic automorphism of order four and a $U(2)\oplus D_4^{\oplus2}$ lattice polarization. This is true because the very general member $\widetilde{X}$ of the latter family has $\rH^2(\widetilde{X},\mathbb{Z})^{+}=\NS(\widetilde{X})\cong U(2)\oplus D_4^{\oplus2}$.
\end{remark}

The above construction of ADE K3 surfaces can be relativized. Let $([a_1:b_1],\ldots,[a_8:b_8])$ be coordinates in $(\PP^1)^8$. Consider the hypersurface $\cC \subseteq(\PP^1)^8\times \PP^1 \times \PP^1$ given by
\[
	y_0^2\prod_{i=1}^4(b_ix_0-a_ix_1)+
	y_1^2\prod_{i=5}^8(b_ix_0-a_ix_1)=0,
\]
which has multidegree $(4,\ldots,4, 2, 2)$. It can be understood as a family of curves over $(\PP^1)^8$. Let $\bU\subseteq(\PP^1)^8$ be the open subset consisting of $8$-tuples of distinct points. Let $\cX \rightarrow \bU \times \PP^1 \times \PP^1$ be the double cover branched along $(\cC+\cL_0+\cL_1)|_{\bU}$, where $\cL_i := V(y_i) \subseteq \bU \times \PP^1 \times \PP^1$ for $i=0,1$. Since an $\SL_{2}$-orbit in $\bU$ parametrizes isomorphic ADE K3 surfaces, $\bU/\SL_{2}$ is a five dimensional parameter space of ADE K3 surfaces with a purely non-symplectic automorphism of order four. 

Note that there is a natural $S_{8}$ action on $\bU/\SL_{2}$ which permutes the eight points.

\begin{definition}\label{def:H}
Let $H \cong (S_{4} \times S_{4}) \rtimes S_{2}$ be the subgroup of permutations of $S_8$ which is generated by the permutations of the first four points, the permutations of the last four points, and the involution which exchanges the set of first four points and the set of last four points. 
\end{definition}

Any ADE K3 surfaces parametrized by an $H$-orbit are isomorphic to each other because $C + L_{0} + L_{1}$ is $(S_{4} \times S_{4})$-invariant, and the $S_{2}$-action induces an isomorphism of associated surfaces derived by the involution $[y_{0}:y_{1}] \to [y_{1}:y_{0}]$. 

Furthermore, it was shown by Kond\=o that two ADE K3 surfaces as above are isomorphic if and only if the associated points on $\bU/\SL_{2}$ are in the same $S_{8}$-orbit \cite[\S\,3.7]{Kon07}. We will come back to this $S_{8}$-invariance in \S\,\ref{S8actionandisok3s}, where we discuss it when some of the eight points on $\PP^1$ collide. In particular, $\bU/\SL_2/S_8$ is a dense open subset of $\bM$.

%-------------------------------------------------------------------------

\subsection{Degenerate point configurations and GIT}

Consider the diagonal $\SL_2$-action on $(\PP^1)^8$ together with the natural symmetric linearization $\mathcal{O}(1, \ldots, 1)$. A point in $(\PP^1)^8$ is stable (resp. semi-stable) if and only if at most three (resp. four) points coincide. We denote the semi-stable locus (resp. stable locus) by $((\PP^{1})^{8})^{ss}$ (resp. $((\PP^{1})^{8})^{s}$) \cite[\S\,3]{MFK94}. 

\begin{lemma}
The construction in \S\,\ref{kondo'sconstruction} yields an ADE K3 surface for any $\bp\in((\PP^1)^8)^{s}$.  
\end{lemma}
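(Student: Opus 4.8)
The plan is to verify the three defining properties of an ADE K3 surface for $X$, the double cover of $S:=\pp$ branched along $B:=C+L_0+L_1\in|\cO_S(4,4)|$, where $L_0=\{y_0=0\}$ and $L_1=\{y_1=0\}$. Setting $L:=\cO_S(2,2)$ so that $B\in|2L|$, I would first observe that the whole statement reduces to showing that, for $\bp\in((\PP^1)^8)^s$, the curve $B$ is reduced with at worst ADE (simple) singularities. Indeed, a double cover of a smooth surface branched along a reduced curve having a simple singularity of a given ADE type acquires a rational double point of the same type; this yields condition (1) and simultaneously makes $X$ normal with canonical Gorenstein singularities. Granting that, conditions (2) and (3) are automatic and configuration-independent: by the cyclic cover formula $\omega_X\cong\pi^*(\omega_S\otimes L)=\pi^*\cO_S=\cO_X$, and since $\pi_*\cO_X\cong\cO_S\oplus L^{-1}=\cO_S\oplus\cO_S(-2,-2)$, a K\"unneth computation gives $\rH^1(X,\cO_X)=\rH^1(S,\cO_S)\oplus\rH^1(S,\cO_S(-2,-2))=0$.

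Next I would localize the singularity analysis. The components of $B$ are $C$ (of bidegree $(4,2)$) together with the two disjoint lines $L_0,L_1$. Since $C\cap L_0$ lies over the roots of $Q:=\prod_{i=5}^{8}(x_0-\lambda_ix_1)$, $C\cap L_1$ over the roots of $P:=\prod_{i=1}^{4}(x_0-\lambda_ix_1)$, and $C$ is singular only over common roots of $P,Q$ or over multiple roots of one of them, every singular point of $B$ lies over a single value $x=a\in\PP^1$. To each such $a$ I attach the pair $(p,q)$, the multiplicities of $a$ as a root of $P$ and of $Q$; stability of $\bp$ is precisely the condition $p+q\le 3$ at every $a$. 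Reducedness of $B$ is then immediate: a vertical line $\{x_0-ax_1=0\}$ divides the equation of $C$ to order $\min(p,q)\le 1$, and the residual quadratic-in-$y$ factor is visibly reduced.

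Finally I would run the local classification by $(p,q)$ with $p+q\le 3$, the cases with $p<q$ being symmetric under $[y_0:y_1]\mapsto[y_1:y_0]$. In suitable local analytic coordinates $(u,t)=(x-a,\,y_1/y_0)$ at the relevant point of the fiber $x=a$, one finds: $(1,0)$ gives a node $A_1$ (the generic case $C\pitchfork L_1$); $(2,0)$ gives three concurrent lines with distinct tangents, an ordinary triple point $D_4$; $(1,1)$ splits off a vertical line meeting the residual curve, $L_0$, and $L_1$ in separate transverse nodes $A_1$; $(3,0)$ produces a cusp of $C$ whose tangent line is exactly $L_1$, so that $B$ is locally $t(t^2+u^3)$, an $E_7$ singularity; and $(2,1)$ splits off a vertical line $V_a$ to which the residual branch is tangent while $L_1$ stays transverse, giving locally $u\,t\,(u+ct^2)$ with $c\neq0$, a singularity with three branches, $\delta=4$, and $\mu=6$, hence $D_6$ (plus a harmless node $V_a\cap L_0$ elsewhere over $x=a$). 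Every outcome is ADE. The main obstacle is exactly this local classification in the most degenerate strata $(3,0)$ and $(2,1)$: there $B$ is no longer a transverse union, so one cannot read off an ordinary multiple point, and must instead pin down the precise tangency of the cuspidal (resp.\ vertical) branch with $L_1$ (resp.\ $V_a$) and match the resulting equation to the $E_7$ (resp.\ $D_6$) normal form; the cohomological inputs and reducedness are by comparison routine.
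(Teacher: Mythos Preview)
Your proposal is correct and follows essentially the same approach as the paper: reduce to showing that the branch curve $B$ has only ADE singularities via a case-by-case local analysis, and handle $\omega_X\cong\cO_X$ and $\rH^1(X,\cO_X)=0$ by the standard double-cover formulas. Your organization by the multiplicity pair $(p,q)$ with $p+q\le 3$ is exactly the paper's Table~\ref{tbl:degptconf} (the rows correspond to $(2,0)$, $(1,1)$, $(2,1)$, $(3,0)$ up to the $[y_0:y_1]\mapsto[y_1:y_0]$ symmetry), and your identifications $D_4$, $A_1$, $D_6$, $E_7$ match.
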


\begin{proof}
Let $\bp = (p_{1},\ldots,p_{8})$ be a stable point configuration with at least one collision. Up to $H$-symmetry, it is clear that Table \ref{tbl:degptconf} describes all the possibilities. By a local computation, we see that the double cover $X$ of $\PP^{1} \times \PP^{1}$ branched along $C + L_{0} + L_{1}$ has only ADE singularities. The two conditions $\omega_{X} \cong \cO_{X}$ and $\rH^{1}(X, \cO_{X}) = 0$ are easy to check using the fact that $X$ is the double cover of $\PP^{1} \times \PP^{1}$ branched along a divisor of class $(4, 4)$. 
\end{proof}

\begin{table}[!ht]
\caption{Degenerate point configurations and singularities on the double cover.}
\label{tbl:degptconf}
\begin{tabular}{|c|c|c|}
\hline
collision & analytic local equation & singularity of double cover\\ \hline \hline
$p_{1} = p_{2} = [0:1]$ & $y_{1}(x_{0}^{2}+y_{1}^{2}) = 0$ & $D_{4}$\\ \hline
$p_{1} = p_{5} = [0:1]$ && $A_{1}$ singularities \\ \hline
$p_{1} = p_{2} = p_{5} = [0:1]$ & $x_{0}y_{1}(x_{0} + y_{1}^{2}) = 0$ & $D_{6}$\\ \hline
$p_{1} = p_{2} = p_{3} = [0:1]$ & $y_{1}(x_{0}^{3}+y_{1}^{2}) = 0$ & $E_{7}$\\ \hline
\end{tabular}
\end{table}

If $\bp$ is strictly semi-stable, then the associated double cover has worse singularities and it is not an ADE K3 surface. Up to the $H$-action, there are three cases:
\begin{enumerate}
\item $p_{1} = p_{2} = p_{5} = p_{6}$;
\item $p_{1} = p_{2} = p_{3} = p_{5}$;
\item $p_{1} = p_{2} = p_{3} = p_{4}$.
\end{enumerate}

For cases (2) and (3), we may compute the minimal resolution of the double cover $X \to \PP^{1} \times \PP^{1}$ branched along $C+L_0+L_1$ using the \emph{canonical resolution} method \cite[III.7]{BHPV04}. The exceptional locus in case (2) is a genus one curve of self-intersection $-2$, and in case (3) is a genus one curve of self intersection $-1$. These singularities are known as $\widetilde{E}_7$ and $\widetilde{E}_8$ respectively (see \cite[\S\,7.6]{Ish18}). In case (1), the ramification divisor is not even reduced. 

For later purpose, the strictly semi-stable points with maximal dimensional stabilizer group are important. There are three types of semi-stable points with positive dimensional stabilizer group, which is isomorphic to $\CC^{*}$. 

\begin{definition}
Let $\bp=(p_1,\ldots,p_8)$ be a strictly semi-stable point configuration. We say that $\bp$ is of \textsf{type a} if, up to $H$-action, it is of the form $p_{1} = p_{2} = p_{5} = p_{6}$ and $p_{3} = p_{4} = p_{7} = p_{8}$. Similarly, we say that $\bp$ is of \textsf{type b} if it is of the form $p_{1} = p_{2} = p_{3} = p_{5}$ and $p_{4} = p_{6} = p_{7} = p_{8}$. Finally, we say that $\bp$ is of \textsf{type c} if it is of the form $p_{1} = p_{2} = p_{3} = p_{4}$ and $p_{5} = p_{6} = p_{7} = p_{8}$. 
\end{definition}

The corresponding curve $C\subseteq \PP^1 \times \PP^1$ is given by:
\begin{itemize}
\item (\textsf{type a}) $(x_0-\lambda_1x_1)^2 (x_0-\lambda_3x_1)^2 (y_0^2+y_1^2)=0$;
\item (\textsf{type b}) $(x_0-\lambda_1x_1)(x_0-\lambda_4x_1) (y_0^2(x_0-\lambda_1x_1)^2 + y_1^2(x_0-\lambda_4x_1)^2)=0$;
\item (\textsf{type c}) $y_0^2(x_0-\lambda_1x_1)^4 + y_1^2(x_0-\lambda_5x_1)^4=0$.
\end{itemize}

Each connected component of such strictly semi-stable points is isomorphic to $\PP^{1} \times \PP^{1} \setminus \Delta$, where $\Delta$ is the diagonal. The locus of \textsf{type a} point configurations has 18 connected components, that of \textsf{type b} configurations has 16 connected components, and that of \textsf{type c} configurations has a unique component. 

\begin{remark}
The associated double covers of $\PP^1\times\PP^1$ branched along $C+L_0+L_1$ with $C$ of \textsf{type a}, \textsf{type b}, and \textsf{type c} appear in Shah's list \cite[Theorem 4.8, B, Type II, (i)--(iii)]{Sha81}.
\end{remark}

%---------------------------------------------------------------------------------------------------------------------------------------------------

\subsection{$S_8$-invariance of Kond\=o's K3 surfaces}
\label{S8actionandisok3s}

Let $X$ be an ADE K3 surface associated to $8$ distinct points on $\PP^1$. In \cite{Kon07} it is observed that the isomorphism class of $X$ is independent from the ordering of the eight points. Here we give a proof which is valid for all stable configurations. 

\begin{proposition}
Let $X$ be an ADE K3 surface which is the double cover of $\PP^1\times\PP^1$ branched along the curve
\[
	B : y_0y_1\left(y_0^2\prod_{i=1}^4(x_0-\lambda_ix_1)
	+y_1^2\prod_{i=5}^8(x_0-\lambda_ix_1)\right)=0.
\]
Let $\tau\in S_8$ and define $X_\tau$ to be the ADE K3 surface obtained as the double cover of $\PP^1\times\PP^1$ branched along the curve
\[
	B_\tau : y_0y_1\left(y_0^2\prod_{i=1}^4(x_0-\lambda_{\tau(i)}x_1)
	+y_1^2\prod_{i=5}^8(x_0-\lambda_{\tau(i)}x_1)\right)=0.
\]
Then $X \cong X_\tau$.
\end{proposition}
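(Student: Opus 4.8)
The plan is to reduce the whole proposition to a single explicit birational modification. First I note that $S_8$ is generated by the subgroup $H$ of Definition~\ref{def:H} together with one transposition exchanging the two blocks: adjoining $(4\,5)$ to the transpositions $(1\,2),(2\,3),(3\,4)$ of the first block and $(5\,6),(6\,7),(7\,8)$ of the second block already yields all the Coxeter generators of $S_8$. We already know $X\cong X_h$ for $h\in H$: the branch curve $B$ is manifestly invariant under permuting the $\lambda_i$ within each block of four, and the block swap is realized by the automorphism $[y_0:y_1]\mapsto[y_1:y_0]$ of the second $\PP^1$. Because the assignment $\bp\mapsto X_{\bp}$ is uniform in the $\lambda_i$, each generator yields an isomorphism for \emph{every} configuration, so by transitivity it suffices to prove $X\cong X_\tau$ for the single transposition $\tau=(4\,5)$.

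For $\tau=(4\,5)$, I would use the birational self-map $\psi$ of $\PP^1\times\PP^1$ lying over the first factor and given on the second factor by $[y_0:y_1]\mapsto[\,y_0(x_0-\lambda_5x_1):y_1(x_0-\lambda_4x_1)\,]$; this is an elementary transformation supported on the two fibers over $x=\lambda_4$ and $x=\lambda_5$. Substituting $Y_0=y_0(x_0-\lambda_5x_1)$ and $Y_1=y_1(x_0-\lambda_4x_1)$ into the equation of $B$ and factoring, a direct computation gives
\[
\psi^*B=B_\tau+2F_{\lambda_4}+2F_{\lambda_5},
\]
where $F_\lambda$ denotes the fiber $\{x_0-\lambda x_1=0\}$; concretely $\psi^*C=C_\tau+F_{\lambda_4}+F_{\lambda_5}$, while the two lines $y_0=0$ and $y_1=0$ each absorb one further fiber. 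The crucial feature is that the two spurious fibers occur with \emph{even} multiplicity.

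I would then exploit the fact that an even branch component contributes trivially to a double cover. If $w^2$ equals a defining section of $\psi^*B$, dividing $w$ by the product of the linear forms cutting out $F_{\lambda_4}$ and $F_{\lambda_5}$ identifies the double cover branched along $\psi^*B$ with the one branched along $B_\tau$, namely $X_\tau$. Together with the birationality of $\psi$ itself, this produces an isomorphism of function fields between $X$ and $X_\tau$; working at the level of function fields is convenient, since it avoids resolving the indeterminacy of $\psi$ explicitly. As $\widetilde X$ and $\widetilde X_\tau$ are smooth K3 surfaces, hence minimal of Kodaira dimension zero, any birational map between them is an isomorphism, and carrying the polarization $F$ through $\psi$ descends this to the desired $X\cong X_\tau$.

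The step I expect to be the main obstacle is making the conclusion uniform over the entire stable locus, as the statement requires. When several $\lambda_i$ coincide, $X$ acquires more complicated but still ADE singularities (by the previous lemma), and one must check that $\psi$ stays a well-defined birational map and that the even-fiber cancellation persists; the only truly degenerate case $\lambda_4=\lambda_5$ renders $\tau$ trivial and is therefore vacuous. A second, more routine point is verifying that $\psi$ respects the degree-four polarizations, so that the isomorphism of minimal resolutions commutes with the contractions $\widetilde X\to X$ and $\widetilde X_\tau\to X_\tau$ and hence really descends to the ADE K3 surfaces.
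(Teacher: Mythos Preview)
Your argument is correct and is essentially the same as the paper's: both reduce to a single transposition across the two blocks and exhibit the isomorphism via the elementary transformation of the second $\PP^1$ given (in your indexing) by $[y_0:y_1]\mapsto[y_0(x_0-\lambda_5x_1):y_1(x_0-\lambda_4x_1)]$, then invoke minimality of K3 surfaces to upgrade birationality to an isomorphism. The only cosmetic difference is that the paper writes this map directly on an affine chart of the double cover as $(\xi,\eta,\zeta)\mapsto(\xi,\tfrac{\xi-\lambda_5}{\xi-\lambda_1}\eta,\tfrac{\xi-\lambda_5}{\xi-\lambda_1}\zeta)$ (for the transposition $(1\,5)$), whereas you work on the base and appeal to the even-multiplicity cancellation; unwinding the definitions shows these are the same map. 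Your caveats about uniformity over the stable locus and descent from $\widetilde X$ to $X$ are reasonable but the paper does not address them either, so your write-up is, if anything, more careful on those points.
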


\begin{proof}
It is enough to prove the case where $\tau$ is a transposition $(ij)$. Because of the symmetry of $B$, we may assume that $(ij) = (15)$. Since smooth K3 surfaces have trivial canonical class, it is sufficient to show that $X$ and $X_{(15)}$ are birational. 

Over the affine patch $x_1y_1\neq 0$, $X$ is defined in $\mathbb{A}_{(x,y,z)}^3$ by
\[
	z^2=y\left(y^2\prod_{i=1}^4(x-\lambda_i)
	+\prod_{i=5}^8(x-\lambda_i)\right),
\]
where we set $y=y_0/y_1$ and $x=x_0/x_1$. Consider the birational transformation
\begin{eqnarray*}
	\mathbb{A}_{(\xi,\eta,\zeta)}^3& \dashrightarrow
	& \mathbb{A}_{(x,y,z)}^3\\
	(\xi,\eta,\zeta)&\mapsto& 
	\left(\xi,\frac{\xi-\lambda_5}{\xi-\lambda_1}\eta,
	\frac{\xi-\lambda_5}{\xi-\lambda_1}\zeta\right).
\end{eqnarray*}
Under this birational transformation, the pull-back of $X$ satisfies
\[
	\frac{(\xi-\lambda_5)^2}{(\xi-\lambda_1)^2}\zeta^2
	=\frac{\xi-\lambda_5}{\xi-\lambda_1}\eta
	\left(\frac{(\xi-\lambda_5)^2}{(\xi-\lambda_1)^2}\eta^2
	\prod_{i=1}^4(\xi-\lambda_i)+\prod_{i=5}^8(\xi-\lambda_i)\right)
\]
\[
	\implies\zeta^2=\eta\left(\frac{\xi-\lambda_5}{\xi-\lambda_1}\eta^2
	\prod_{i=1}^4(\xi-\lambda_i)
	+\frac{\xi-\lambda_1}{\xi-\lambda_5}\prod_{i=5}^8(\xi-\lambda_i)\right),
\]
which is the equation for $X_{(15)}$ over the affine patch $x_1y_1\neq0$. Thus they are birational.
\end{proof}

%---------------------------------------------------------------------------------------------------------------------------------------------------
\section{Stable pairs and KSBA compactification}
\label{stablepairsandKSBAcompactification}

%-------------------------------------------------------------------------

In this section we recall the definition of stable pair, their moduli spaces, and the theory of abelian covers. Our main references are \cite{Ale15, AP12, Kol13, Kol18}.

\subsection{Definition of stable pair}

\begin{definition}
Let $X$ be a variety and let $D$ be a $\mathbb{Q}$-divisor on $X$ with coefficients in $(0,1]$. A pair $(X,D)$ is \emph{semi-log canonical} if:
\begin{enumerate}
\item $X$ is demi-normal (that is, $X$ is $S_2$ and its codimension $1$ points are either regular or ordinary nodes);
\item If $\nu : X^\nu\rightarrow X$ is the normalization with conductors $E\subseteq X$ and $E^\nu\subseteq X^\nu$, then the support of $E$ does not contain any irreducible component of $D$;
\item $K_X+D$ is $\mathbb{Q}$-Cartier;
\item The pair $(X^\nu,E^\nu+\nu_*^{-1}D)$ is log canonical. More precisely, for each connected component $Z$ of $X^\nu$ the pair $(Z,(E^\nu+\nu_*^{-1}D)|_Z)$ is log canonical, where $\nu_*^{-1}D$ denotes the strict transform of $D$. For the definition of log canonical we refer to \cite[Definition 2.8]{Kol13}.
\end{enumerate}
\end{definition}

\begin{definition}
A pair $(X,D)$ is \emph{stable} if the following conditions are satisfied:
\begin{enumerate}
\item $(X,D)$ is a semi-log canonical pair;
\item $K_X+D$ is ample.
\end{enumerate}
\end{definition}

Let $\epsilon$ be a sufficiently small positive rational number. 

\begin{lemma}
Let $([\lambda_1:1],\ldots,[\lambda_8:1])\in(\PP^1)^8$ be a stable point and let $C\subseteq\PP^1\times\PP^1$ as in Equation \eqref{eqn:C}. Let $B:=C+L_0+L_1$. Then
\begin{enumerate}
\item $\left(\PP^1\times\PP^1,\frac{1+\epsilon}{2}B\right)$ is a stable pair;
\item $(K_{\PP^1\times\PP^1}+\frac{1+\epsilon}{2}B)^2=8\epsilon^2$.
\end{enumerate}
In short, $((\PP^{1})^{8})^{s}$ parametrizes stable pairs $(\PP^{1} \times \PP^{1}, \frac{1+\epsilon}{2}B)$. 
\end{lemma}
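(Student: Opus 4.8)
The plan is to verify the two stable-pair axioms in turn, extracting the numerical assertion (2) along the way. I would first dispose of everything except log canonicity. Since $\PP^1\times\PP^1$ is smooth it is automatically demi-normal with trivial normalization and empty conductor, so the semi-log canonical conditions collapse to the single requirement that $\left(\PP^1\times\PP^1,\frac{1+\epsilon}{2}B\right)$ be log canonical in the usual sense, and $K_{\PP^1\times\PP^1}+\frac{1+\epsilon}{2}B$ is $\QQ$-Cartier for free. For the numerics, $C$ has class $(4,2)$ and each of $L_0,L_1$ has class $(0,1)$, so $B\sim(4,4)=-2K_{\PP^1\times\PP^1}$; hence
\[
	K_{\PP^1\times\PP^1}+\tfrac{1+\epsilon}{2}B\sim(-2,-2)+\tfrac{1+\epsilon}{2}(4,4)=(2\epsilon,2\epsilon).
\]
As $\epsilon>0$ this class is ample, which gives the ampleness axiom, and $(2\epsilon,2\epsilon)^2=2(2\epsilon)^2=8\epsilon^2$ on $\PP^1\times\PP^1$, which is assertion (2). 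Note also $\frac{1+\epsilon}{2}\in(0,1]$ for $\epsilon\le1$, so the boundary coefficient is admissible.

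It then remains to prove that $\left(\PP^1\times\PP^1,\frac{1+\epsilon}{2}B\right)$ is log canonical for $\epsilon$ small, and since this is a local condition I would check it point by point. Away from the finitely many points coming from collisions of the $\lambda_i$, the branch curve $B$ is either smooth or has an ordinary node at the transverse intersections $C\cap L_0$ and $C\cap L_1$; there $B$ is simple normal crossing with all coefficients equal to $\frac{1+\epsilon}{2}<1$, so the pair is log canonical. At a cluster of coinciding $\lambda_i$ the divisor $B$ acquires a worse singularity, but because $\bp$ is stable at most three of the $\lambda_i$ collide, and singularities arising from different clusters occur at different points of $B$, so I may analyze one cluster at a time. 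Up to the $H$-symmetry these are exactly the configurations of Table~\ref{tbl:degptconf}, whose listed analytic local equations are precisely the local equations of $B$: an ordinary triple point $w(x^2+w^2)=0$ (two points within one group); a configuration $xw(x+w^2)=0$ consisting of two transverse lines together with a smooth branch tangent to one of them (two points of one group and one of the other); and a line tangent to a cusp $w(x^3+w^2)=0$ (three points within one group). The remaining mixed case $\lambda_i=\lambda_j$ with $i\le4<j$ merely splits a ruling off $C$ and contributes additional nodes, already handled.

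For each local model I would run an explicit log resolution by successively blowing up the singular point and tracking discrepancies, verifying that every exceptional divisor has discrepancy $\ge-1$ for the coefficient $\frac{1+\epsilon}{2}$. The upshot is that the log canonical thresholds of the reduced local branch curves are $1$ (node), $\tfrac23$ (ordinary triple point), $\tfrac35$ (the $xw(x+w^2)$ model), and $\tfrac59$ (the cuspidal model), all strictly greater than $\tfrac12$; hence choosing $0<\epsilon<\tfrac19$ keeps $\frac{1+\epsilon}{2}$ below every threshold and makes the pair log canonical everywhere. The main obstacle is the last, $E_7$-type model $w(x^3+w^2)=0$, where the line is tangent to the cuspidal branch: a single blow-up fails to separate the branches, and one needs three successive blow-ups—after the first the cusp becomes a smooth branch tangent to the exceptional line, reproducing the $xw(x+w^2)$-type picture one level up, and only the third blow-up resolves the configuration—so the deepest exceptional divisor produces the binding constraint $\frac{1+\epsilon}{2}\le\tfrac59$. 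All the other local models are shallower and impose weaker bounds on $\epsilon$, so the stability of the pair follows.
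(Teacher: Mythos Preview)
Your proof is correct and follows the same strategy as the paper: compute the class $K_{\PP^1\times\PP^1}+\frac{1+\epsilon}{2}B\sim(2\epsilon,2\epsilon)$ to get ampleness and the self-intersection $8\epsilon^2$, then verify log canonicity locally using the case list in Table~\ref{tbl:degptconf}. The only difference is that you spell out the log canonical thresholds ($1,\tfrac23,\tfrac35,\tfrac59$) of the four local models and the resolution of the $E_7$ configuration, whereas the paper simply says ``by inspecting all cases in Table~\ref{tbl:degptconf}''; your explicit bound $\epsilon<\tfrac19$ is a harmless refinement of the paper's standing assumption that $\epsilon$ is sufficiently small.
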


\begin{proof}
If the eight points are distinct, then the divisor $B$ is simple normal crossing, hence $\left(\PP^1\times\PP^1,\frac{1+\epsilon}{2}B\right)$ is semi-log canonical. If some of the eight points coincide, then the semi-log canonicity of the pair follows by inspecting all cases in Table \ref{tbl:degptconf}. The ampleness of $K_{\PP^1\times\PP^1}+\frac{1+\epsilon}{2}B$ and the equality in (2) follow from $K_{\PP^1\times\PP^1}+\frac{1+\epsilon}{2}B\sim2\epsilon(1,1)$. 
\end{proof}

%-------------------------------------------------------------------------

\subsection{Stable pairs and finite abelian covers}
\label{coveringtrick}

For the reader's convenience, we recall the following well known facts about stable pairs and finite abelian covers. For a reference, see \cite{AP12}.

\begin{definition}
A \emph{morphism of pairs} $f : (X, B_{X}) \rightarrow (P, B_{P})$ is a morphism $f : X \rightarrow P$ mapping $\mathrm{Supp}(B_{X})$ to $\mathrm{Supp}(B_{P})$. If $G$ is a finite abelian group, then a morphism $\pi : (X,B_{X})\rightarrow (P,B_{P})$ is called a \emph{$G$-cover} if:
\begin{itemize}
\item $\pi : X\rightarrow P$ is the quotient morphism for a generically faithful action of $G$;
\item $\pi$ is branched along $\mathrm{Supp}(B_{P})$ and ramified at $\mathrm{Supp}(B_{X})$;
\item $K_X+B_{X} = \pi^*(K_P+B_{P})$.
\end{itemize}
\end{definition}

\begin{lemma}[\protect{\cite[Lemma 2.3]{AP12}}]
Let $\pi : (X, B_{X}) \to (P, B_{P})$ be a $G$-cover for a finite abelian group $G$. Then $(X, B_{X})$ is stable if and only if $(P, B_{P})$ is stable.
\end{lemma}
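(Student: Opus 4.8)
The plan is to verify the two defining conditions of stability — semi-log canonicity and ampleness of the log canonical class — separately, exploiting in each case the finiteness of $\pi$ and the ramification identity $K_X+B_X=\pi^*(K_P+B_P)$ built into the definition of a $G$-cover. Note that abelianness of $G$ is not essential to the stability comparison itself; what matters is that $\pi$ is a finite quotient morphism satisfying the displayed crepant pullback formula.

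The ampleness and $\QQ$-Cartier conditions are the easy part. Since $\pi$ is finite and surjective, pullback along $\pi$ preserves ampleness, so if $K_P+B_P$ is ample then so is $K_X+B_X=\pi^*(K_P+B_P)$; conversely, a finite surjective morphism reflects ampleness, so ampleness of the pullback forces $K_P+B_P$ to be ample. The $\QQ$-Cartier hypothesis is handled in the same spirit: because $\pi$ is a $G$-cover, $\cO_P$ is a direct summand of $\pi_*\cO_X$ (via the trace/averaging idempotent for the $G$-action), so a Weil divisor class on $P$ is $\QQ$-Cartier as soon as its pullback to $X$ is, giving the equivalence of the $\QQ$-Cartier conditions from the single relation $K_X+B_X=\pi^*(K_P+B_P)$.

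The substance of the lemma lies in the equivalence of semi-log canonicity, which I would establish by reducing to the normal (log canonical) case. Both $X$ and $P$ are demi-normal, and the $G$-action is compatible with normalization, so $\pi$ induces a finite $G$-cover $\pi^\nu\colon(X^\nu,E^\nu+\nu_*^{-1}B_X)\to(P^\nu,E_P^\nu+\nu_*^{-1}B_P)$ of the normalizations, carrying conductor to conductor. One checks that conditions (1)–(2) in the definition of semi-log canonicity match up on the two sides under this correspondence. It then suffices to prove the normal statement: for a finite surjective morphism $f\colon Y\to Z$ of normal varieties with $K_Y+\Delta_Y=f^*(K_Z+\Delta_Z)$, the pair $(Z,\Delta_Z)$ is log canonical if and only if $(Y,\Delta_Y)$ is. This is the classical behavior of log canonicity under finite morphisms: passing to a common log resolution and comparing discrepancies via the ramification formula shows that the discrepancy of any divisor over $Z$ is $\geq-1$ exactly when the discrepancies of the divisors lying over it on $Y$ are $\geq-1$, with descent using surjectivity of $f$ and ascent holding for any finite morphism.

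The main obstacle I expect is the bookkeeping in the non-normal reduction: matching the conductor $E^\nu$ on $X^\nu$ with its image on $P^\nu$ and checking that the strict transforms of $B_X$ and $B_P$ correspond correctly under $\pi^\nu$, so that the normal-case discrepancy comparison applies component by component. Once this dictionary between the data on $(X,B_X)$ and on $(P,B_P)$ is in place, the discrepancy comparison is formal. Since this is \cite[Lemma 2.3]{AP12}, for the complete resolution-theoretic argument I would ultimately defer to that reference.
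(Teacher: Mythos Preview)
Your proposal is correct and matches the paper's treatment. The paper does not prove the lemma in full either: it cites \cite[Lemma~2.3]{AP12} for the semi-log canonical equivalence and, in the remark immediately following, deduces the ampleness equivalence from finiteness of $\pi$ (citing \cite[Proposition~1.2.13 and Corollary~1.2.28]{Laz04}), which is exactly your decomposition into the SLC part and the ampleness/$\QQ$-Cartier part.
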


\begin{remark}
More precisely, \cite[Lemma 2.3]{AP12} guarantees that $(X, B_{X})$ is semi-log canonical if and only if $(P, B_{P})$ is semi-log canonical. We have that $K_X+B_{X}$ is ample if and only if $\pi^*(K_P+B_{P})$ is because $\pi$ is a finite covering (see \cite[Proposition 1.2.13 and Corollary 1.2.28]{Laz04}).
\end{remark}

\begin{example}
\label{exampleofcoverofinterest}
Let $P := \PP^{1} \times \PP^{1}$ and let $B := C + L_{0} + L_{1}$ as in \S\,\ref{kondo'sconstruction}. Let $X$ be the double cover of $P$ branched along $B$ and let $R$ be the ramification divisor. Then it is straightforward to check that 
\[
	\pi : (X, \epsilon R) \to \left(P, \frac{1+\epsilon}{2}B\right)
\]
is a $(\ZZ/2\ZZ)$-cover. In particular, $(K_X+\epsilon R)^2=\pi^*\left(K_P+\frac{1+\epsilon}{2}B\right)^2=16\epsilon^2$.
\end{example}

%-------------------------------------------------------------------------

\subsection{The moduli functor}
\label{viehwegmodulifunctor}

\begin{definition}
We fix constants $d,N\in\mathbb{Z}_{>0}$, $C\in\mathbb{Q}_{>0}$, and $\underline{b}=(b_1,\ldots,b_n)$ with $b_i\in(0,1]\cap\mathbb{Q}$ and $Nb_i\in\mathbb{Z}$ for all $i=1,\ldots, n$. The \emph{Viehweg's moduli stack} $\overline{\cV} := \overline{\cV}_{d, N, C, \underline{b}}$ is defined as follows. For any reduced $\CC$-scheme $S$, $\overline{\cV}_{d,N,C,\underline{b}}(S)$ is the set of proper flat families $\cX \rightarrow S$ together with a divisor $\cB=\sum_i b_i \cB_i$ satisfying:
\begin{enumerate}
\item For all $i=1,\ldots, n$, $\cB_i$ is a codimension one closed subscheme such that $\cB_{i} \to S$ is flat at the generic points of $\cX_{s} \cap \mathrm{Supp}(\cB_{i})$ for every $s \in S$;
\item Every geometric fiber $(X,B)$ is a stable pair of dimension $d$ with $(K_X+B)^d=C$;
\item There exists an invertible sheaf $\cL$ on $\cX$ such that for every geometric fiber $(X,B)$ one has $\cL|_X\cong\mathcal{O}_X(N(K_X+B))$.
\end{enumerate}
In what follows, we refer to such families $(\mathcal{X},\mathcal{B})\rightarrow S$ as \emph{families of stable pairs}.
\end{definition}

\begin{remark}\label{rem:DMstack}
Over characteristic zero, an algebraic stack is Deligne--Mumford if and only if all parametrized objects have finite automorphism groups \cite[Remark 8.3.4]{Ols16}. Since every stable pair parametrized by $\overline{\cV}$ has a finite automorphism group \cite[Theorem 1.20]{Fuj14}, $\overline{\cV}$ is a Deligne--Mumford stack.
\end{remark}

Let $\underline{b}$ be very general \cite[\S\,1.5.3]{Ale15}. For a suitably chosen positive integer $N$ depending on $d,C$, and $\underline{b}$ (which does not need to be specified, see \cite[\S\,3.13]{Ale96}), the stack $\overline{\cV}$ above is coarsely represented by a projective scheme by \cite[Theorem 1.6.1]{Ale15}.

We now describe our special case of interest.

\begin{definition}
\label{def:KSBAcomp}
Let $\overline{\cV}$ be the Viehweg's moduli stack for $d = 2$, $C = 16\epsilon^{2}$ for a small general $\epsilon > 0$, and $\underline{b} = (b_{1}) = (\epsilon)$. The pairs $(X, \epsilon R)$ where $X$ is an ADE K3 surface in \S\,\ref{kondo'sconstruction} and $R = \frac{1}{2}\pi^{*}B$ is the ramification divisor, are parametrized by $\overline{\cV}$. Now consider the family of stable pairs
\[
	\left(\cY := \bU \times \PP^1 \times \PP^1, 
	\frac{1+\epsilon}{2}(\cC+\cL_0+\cL_1)|_\bU \right)
	\rightarrow\bU.
\]
(For the definitions of $\bU,\cC,\cL_0,\cL_1$ we refer to \S\,\ref{kondo'sconstruction}.) Let $\cX$ be the double cover of $\cY$ branched along $(\cC + \cL_{0} + \cL_{1})|_{\bU}$ and let $\cR$ be the ramification divisor. Then we obtain a family of stable pairs $(\cX, \epsilon \cR) \to \bU$ where the fibers $X$ are ADE K3 surfaces with a purely non-symplectic automorphism of order four and $R$ is the ramification divisor. Thus we obtain a morphism $\bU \to \overline{\cV}$ and denote by $\overline{\cK}'$ the closure of its image in $\overline{\cV}$. Let $\overline{\bK}'$ be the coarse moduli space corresponding to $\overline{\cK}'$, and denote by $\overline{\bK}$ its normalization. Observe that $\overline{\bK}$ is compactifying $\bU/\SL_2/H$, and we call it the \emph{KSBA compactification} of the moduli space of ADE K3 pairs with purely non-symplectic automorphism of order four and $U(2) \oplus D_{4}^{\oplus 2}$ lattice polarization. As we already pointed out in the introduction, $\overline{\bK}$ is compactifying a finite cover of $\bM$ in Definition~\ref{def:moduliM}, and more precisely a $(S_8/H)$-cover.
\end{definition}

Our ultimate goal is to study the geometry of the compactified moduli space $\overline{\bK}$. To do so, we consider the following other projective moduli space.

\begin{definition}
\label{stackforbasestablepairs}
Let $\overline{\cT}$ be the Viehweg's moduli stack for $d=2$, $C=8\epsilon^2$, $\underline{b}=(b_{1}) = \left(\frac{1+\epsilon}{2}\right)$. Consider again the family of stable pairs $(\cY,\frac{1+\epsilon}{2}(\cC+\cL_0+\cL_1)|_{\bU})\rightarrow\bU$. There is an induced morphism $\bU\rightarrow\overline{\cT}$ and denote by $\overline{\cJ}'$ the closure of its image in $\overline{\cT}$. Let $\overline{\bJ}'$ be the coarse moduli space corresponding to $\overline{\cJ}'$, and denote by $\overline{\bJ}$ its normalization. We have that $\overline{\bJ}$ is compactifying $\bU/\SL_2/H$, and hence it is birational to $\overline{\bK}$.
\end{definition}

In Proposition \ref{compactificationsarethesame} we show that $\overline{\bK} \cong \overline{\bJ}$. (More precisely, we will prove that there is a bijective morphism $\overline{\bK}' \to \overline{\bJ}'$. This does not imply that $\overline{\bK}' \cong \overline{\bJ}'$, but we can conclude that their normalizations are isomorphic.) Thus it is enough to investigate the geometry of $\overline{\bJ}$. 

\begin{remark}

\

\begin{enumerate}
\item Note that, however, the associated stacks $\overline{\cK}'$ and $\overline{\cJ}'$ are not isomorphic because of their stacky structure. Recall that the former parametrizes stable pairs $(X, \epsilon R)$ where $X$ is an ADE K3 surface with a purely non-symplectic automorphism of order four, and the latter stable pairs $(\PP^{1} \times \PP^{1}, \frac{1+\epsilon}{2}B)$. Therefore, a general object parametrized by $\overline{\cK}'$ has an extra $\ZZ/2\ZZ$-action which comes from its double covering structure. 
\item The proof of Proposition \ref{compactificationsarethesame} tells us that there is a bijective morphism $\overline{\bK}' \to \overline{\bJ}'$ between the coarse moduli spaces.
\end{enumerate}
\end{remark}

Recall that $\bU\subseteq(\mathbb{P}^1)^8$ is the locus of eight distinct points. In Definitions~\ref{def:KSBAcomp} and \ref{stackforbasestablepairs} we introduced two compactifications of $\bU/\SL_2/H$, which we denoted by $\overline{\bK}$ and $\overline{\bJ}$. The remaining part of this section is devoted to proving the following.

\begin{proposition}
\label{compactificationsarethesame}
The compactifications $\overline{\bK}$ and $\overline{\bJ}$ are isomorphic.
\end{proposition}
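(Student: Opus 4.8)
The plan is to upgrade the birational identification $\overline{\bJ}\dashrightarrow\overline{\bK}$, which is the identity on the common dense open locus $\bU/\SL_2/H$, to an isomorphism by producing morphisms in both directions from the $(\ZZ/2\ZZ)$-cover correspondence of \cite{AP12}, and then verifying that these two morphisms invert one another. Both $\overline{\bJ}$ and $\overline{\bK}$ are normal and projective, hence proper and separated, being normalizations of closed subschemes of the projective coarse moduli spaces underlying $\overline{\cT}$ and $\overline{\cV}^{\Kthree}$; this separatedness is what will make the gluing arguments work.

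First I would build a morphism $\Phi\colon\overline{\bJ}\to\overline{\bK}$ by taking double covers in families. After passing to a suitable cover of $\overline{\bJ}$ on which the universal family of base pairs $(\cP,\frac{1+\epsilon}{2}\cB)$ is defined, I form the $(\ZZ/2\ZZ)$-cover of $\cP$ branched along $\cB$ exactly as in Example~\ref{exampleofcoverofinterest}. By \cite[Lemma 2.3]{AP12} every fiber of the resulting family $(\cX,\epsilon\cR)$ is a stable pair, so the family is classified by $\overline{\cV}^{\Kthree}$, and its classifying map lands in $\overline{\cK}'$ because it does so over the dense locus parametrized by $\bU$. Since isomorphic base pairs yield isomorphic covers, the induced map descends to the normalized coarse spaces and produces $\Phi$, which restricts to the identity on $\bU/\SL_2/H$.

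Symmetrically, I would build $\Psi\colon\overline{\bK}\to\overline{\bJ}$ by taking quotients. On the K3 side the pairs carry the order-four automorphism $\sigma$, whose square $\sigma^2$ is the deck involution of the cover $\pi$; quotienting the universal family $(\cX,\epsilon\cR)$ by $\sigma^2$ returns the base-pair family $(\cP,\frac{1+\epsilon}{2}\cB)$, which is stable again by \cite[Lemma 2.3]{AP12}, and hence defines $\Psi$. On $\bU/\SL_2/H$ the double cover and the quotient by the involution are inverse to one another, so $\Psi\circ\Phi$ and $\Phi\circ\Psi$ restrict to the respective identity maps on a dense open subset; since $\overline{\bJ}$ and $\overline{\bK}$ are reduced and separated, the two compositions equal the identity everywhere, and $\Phi$ and $\Psi$ are mutually inverse isomorphisms.

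The step I expect to be the main obstacle is ensuring that both constructions extend over the boundary as families of stable pairs, and that the two limits agree. Concretely, one must check that the building data of the cover—a line bundle $L$ with $\cB\in|2L|$—extends over the degenerate fibers so that the double cover is defined there, and that $\sigma^2$ extends to an automorphism of each limit surface so that the quotient is defined. The identification of the stable limit of the covers with the cover of the stable limit then rests on the separatedness of $\overline{\bK}$ (uniqueness of stable limits) together with the fact that \cite[Lemma 2.3]{AP12} preserves stability in both directions; the abelian-cover formalism of \cite{AP12} is precisely what makes these extensions available.
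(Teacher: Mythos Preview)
Your overall strategy---produce the quotient map and the double-cover map and show they are mutual inverses on a dense open---is reasonable, but it is more than the paper needs and the extra direction is where the difficulty concentrates. The paper constructs only \emph{one} morphism, namely the quotient map $f\colon\overline{\bK}\to\overline{\bJ}$, and then concludes by Zariski's Main Theorem: $f$ is a birational morphism of normal varieties which is bijective on points (surjective because it is the identity on $\bU/\SL_2/H$, injective because a double cover of a given base pair branched along a given divisor is unique), hence an isomorphism. No inverse morphism $\Phi$ is ever built.

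This matters because your $\Phi$ carries a genuine obstacle that you flag but do not resolve: to form the double cover in families over all of $\overline{\bJ}$ you need the building data---a line bundle $\cL$ with $\cB\in|2\cL|$---to extend over every degenerate fiber and to glue to a line bundle on the total space. Nothing in \cite{AP12} provides this automatically; it depends on the actual shape of the boundary pairs, which in this paper is only determined later in \S\ref{explicitcalculationofstablereplacement}. By contrast, the quotient direction needs no building data, only that the deck involution persists to the limits. The paper isolates exactly this point in a separate lemma (Lemma~\ref{extendactionfamiliesstablepairs}): if a family of stable pairs carries a fiberwise $G$-action over a dense open, then the action extends to the whole family, because resolving the indeterminacy of each $g\in G$ and passing to the log canonical model reproduces the original family. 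Your appeal to ``$\sigma^2$ extends'' is correct in spirit, but this lemma is the mechanism that makes it rigorous; invoking the order-four automorphism $\sigma$ itself is unnecessary and slightly misleading, since the moduli problem for $\overline{\bK}$ records only the pair $(X,\epsilon R)$, not $\sigma$.

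In short: drop $\Phi$ entirely, formalize the extension of the involution via the log canonical model argument, and finish with Zariski's Main Theorem rather than by exhibiting an explicit inverse.
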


To prove the claim above we need a lemma.

\begin{lemma}
\label{extendactionfamiliesstablepairs}
Let $S$ be a scheme and let $(\cX,\cB_{\cX})\rightarrow S$ be a family of stable pairs. Let $G$ be a finite abelian group and assume there exists a dense open subset $U\subseteq S$ such that $(\cX,\cB_{\cX})|_U$ has a fiberwise generically faithful action of $G$ which ramifies at $\mathrm{Supp}(\cB_{\cX}|_U)$. Then the $G$-action extends to the whole $(\cX,\cB_{\cX})$ giving a fiberwise generically faithful action which ramifies at $\mathrm{Supp}(\cB_{\cX})$.
\end{lemma}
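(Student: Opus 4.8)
The plan is to exploit the fact that a family of stable pairs is, up to the choice of a suitable multiple, canonically embedded by its relative log canonical ring, so that automorphisms of the pairs extend automatically once they are known on a dense open set. First I would pass to the relative log canonical sheaves: since each fiber $(X,B)$ is stable, $K_X+B$ is ample, and by the boundedness built into Viehweg's moduli functor there is an integer $N$ (independent of the fiber) such that $\cO_{\cX}(N(K_{\cX/S}+\cB_{\cX}))$ is relatively very ample and its pushforward $\cE:=\pi_*\cO_{\cX}(N(K_{\cX/S}+\cB_{\cX}))$ is a locally free sheaf on $S$ whose formation commutes with base change. This realizes $\cX$ as a closed subscheme of $\PP(\cE)$ over $S$, and $\cB_{\cX}$ is cut out inside this projective bundle.

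Next I would interpret the given $G$-action over $U$ functorially. For each $g\in G$, the automorphism $g$ of $(\cX,\cB_{\cX})|_U$ preserves $K_{\cX/S}+\cB_{\cX}$ (it is an automorphism of pairs, hence fixes the relative log canonical class), so it induces a linear automorphism of $\cE|_U$ covering the identity on $U$, i.e. a section over $U$ of the relative projective linear group $\mathrm{PGL}(\cE)\to S$. The key point is that $g$ is thereby encoded as a morphism $U\to \mathrm{PGL}(\cE)$, equivalently a $U$-point of a scheme that is \emph{separated} and of finite type over $S$. Because $U\subseteq S$ is dense and $S$ may be assumed normal (or at worst reduced, after base change on each component), I would argue that this partially defined section extends over all of $S$: over a normal base a rational map to a separated $S$-scheme that is defined in codimension zero and along which the action is bounded extends, and in the general reduced case one reduces to the normalization and descends. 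This produces, for every $g\in G$, a global automorphism $\widetilde g$ of $\PP(\cE)$ over $S$ preserving the closed subscheme $\cX$.

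The main obstacle, and the step I would spend the most care on, is verifying that the extended automorphisms $\widetilde g$ actually preserve $\cX$ and $\cB_{\cX}$ as subschemes (not merely over $U$) and that the extension is again fiberwise generically faithful and ramifies along $\mathrm{Supp}(\cB_{\cX})$. For the preservation of $\cX$ and $\cB_{\cX}$ I would use that $\widetilde g(\cX)$ and $\cX$ are closed subschemes of $\PP(\cE)$ agreeing over the dense open $U$; since $\cX\to S$ is flat and proper and $S$ is reduced, two such flat families that coincide over a dense open subset coincide everywhere, and the same applies to the divisorial components $\cB_i$ using the flatness hypothesis (1) in the definition of the moduli functor. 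For generic faithfulness I would note that the map $G\to \aut(\cX/S)$ is determined by its restriction to $U$, where it is injective on each fiber by hypothesis, and that the specialization of a faithful fiberwise action stays faithful because faithfulness is an open condition detected on the generic point of each fiber; a fiber on which some $g$ acted trivially would force $g$ to act trivially over a neighborhood, contradicting faithfulness over $U$. Finally, ramification along $\mathrm{Supp}(\cB_{\cX})$ is preserved because the ramification locus of the quotient map is intrinsic to the $G$-action and matches the stable-pair divisor on the dense open set; by the same closed-subscheme continuity argument it matches everywhere.
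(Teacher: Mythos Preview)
Your approach is different from the paper's, and the extension step contains a real gap. You propose to view each $g\in G$ as a section $U\to\mathrm{PGL}(\mathcal{E})$ and then extend it to all of $S$, asserting that ``over a normal base a rational map to a separated $S$-scheme that is defined in codimension zero and along which the action is bounded extends.'' But $\mathrm{PGL}(\mathcal{E})\to S$ is an \emph{affine} group scheme, and sections of affine $S$-schemes need not extend even from a dense open of a smooth curve: already for $S=\mathbb{A}^1$ the section $t\mapsto\left[\begin{smallmatrix}t&0\\0&1\end{smallmatrix}\right]$ of $\mathrm{PGL}_2\times S\to S$ over $S\setminus\{0\}$ has no extension across the origin. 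The phrase ``along which the action is bounded'' is too vague to repair this. A correct version of your idea would replace $\mathrm{PGL}(\mathcal{E})$ by the relative automorphism scheme $\aut_S(\cX,\cB_{\cX})$, which is finite (hence proper) over $S$ because stable pairs have finite automorphism groups; then for \emph{normal} $S$ the closure of the graph of the section is finite and birational over $S$, hence an isomorphism, and one gets the extension. However, the lemma does not assume $S$ normal, and the descent from the normalization that you sketch is not justified: a finite birational morphism to a non-normal reduced base need not be an isomorphism.

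The paper sidesteps this entirely by never attempting to extend a section. For each $g$ it resolves the indeterminacy of the rational self-map $\alpha_g\colon\cX\dashrightarrow\cX$ via a blow-up $\cX'\to\cX$, producing an honest morphism $\alpha_g'\colon\cX'\to\cX$. Since $K_{\cX/S}+\cB_{\cX}$ is already relatively ample, $(\cX,\cB_{\cX})$ is its own relative log canonical model, and it is equally the relative log canonical model of the blown-up pair $(\cX',\cB_{\cX}')$. The morphism $\alpha_g'$ then descends through the log canonical model to give the required extension $\alpha_g^{\mathrm{lc}}\colon\cX\to\cX$. This argument uses only the intrinsic $\mathrm{Proj}$ of the relative log canonical ring and imposes no normality hypothesis on $S$.
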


\begin{proof}
Let $g\in G$ be arbitrary. We show that the corresponding action $\alpha_g : \cX|_U\rightarrow\cX|_U$ extends to $\cX$. Consider a resolution of indeterminacies

\begin{center}
\begin{tikzpicture}[>=angle 90]
\matrix(a)[matrix of math nodes,
row sep=2em, column sep=2em,
text height=1.5ex, text depth=0.25ex]
{&\cX'&\\
\cX&&\cX.\\};
\path[->] (a-1-2) edge node[above]{}(a-2-1);
\path[->] (a-1-2) edge node[above right]{$\alpha_g'$}(a-2-3);
\path[dashed,->] (a-2-1) edge node[below]{$\alpha_g$}(a-2-3);
\end{tikzpicture}
\end{center}
Let $\cB_{\cX}'$ to be the strict transform of $\cB_{\cX}$ under $\cX'\rightarrow\cX$. Then $\alpha_g'$ induces a morphism $\alpha_g^\textrm{lc}$ from the log canonical model of $(\cX',\cB_{\cX}')$ to $\cX$ \cite[Definition 1.19]{Kol13}. Since the log canonical model of $(\cX',\cB_{\cX}')$ is $(\cX,\cB_{\cX})$, it follows that $\alpha_g^\textrm{lc}$ is the desired extension of $\alpha_g$.
\end{proof}

\begin{proof}[Proof of Proposition~\ref{compactificationsarethesame}]
Since $\overline{\cK}'$ is a Deligne--Mumford stack (see Remark~\ref{rem:DMstack}), there exists a scheme $A$ and a surjective \'etale morphism $\alpha : A\rightarrow\overline{\cK}'$. Therefore, there exists a family $\cA\rightarrow A$ (we omit the datum of the divisor for simplicity of notation), and every object parametrized by $\overline{\cK}'$ appears as a fiber of $\cA\rightarrow A$ because $\alpha$ is surjective. In particular, there exists a dense open subset $U\subseteq A$ such that $\cA|_U\rightarrow U$ admits a fiberwise $(\mathbb{Z}/2\mathbb{Z})$-action. So we can apply Lemma~\ref{extendactionfamiliesstablepairs} to extend the $(\mathbb{Z}/2\mathbb{Z})$-action to the whole family $\cA\rightarrow A$.

Now, let $S$ be a normal scheme and let $\cX\rightarrow S$ be a family for the stack $\overline{\cK}'$. Consider the algebraic space $S':=S\times_{\overline{\cK}'}A$, which we may assume is a scheme by replacing it by its atlas, if necessary. We have that $S'$ comes with a family $\cX'\rightarrow S'$ obtained by pulling-back $\cA\rightarrow A$ along the morphism $S'\rightarrow A$. Observe that $\cX'\rightarrow S'$ equals the pull-back of $\cX\rightarrow S$ along the morphism $S'\rightarrow S$ because the two compositions $S'\rightarrow A\rightarrow\overline{\cK}'$ and $S'\rightarrow S\rightarrow\overline{\cK}'$ are equal. We can summarize these considerations in the following commutative diagrams of cartesian squares:

\begin{center}
\begin{tikzpicture}[>=angle 90]
\matrix(a)[matrix of math nodes,
row sep=.5em, column sep=2em,
text height=1.5ex, text depth=0.25ex]
{&\cX'&\\
\cX&&\cA\\
&S'&\\
S&&A\\
&\overline{\cK}'.&\\};
\path[->] (a-1-2) edge node[above]{}(a-2-1);
\path[->] (a-1-2) edge node[above]{}(a-2-3);
\path[->] (a-1-2) edge node[above]{}(a-3-2);
\path[->] (a-2-1) edge node[above]{}(a-4-1);
\path[->] (a-2-3) edge node[above]{}(a-4-3);
\path[->] (a-3-2) edge node[above]{}(a-4-1);
\path[->] (a-3-2) edge node[above]{}(a-4-3);
\path[->] (a-4-1) edge node[above]{}(a-5-2);
\path[->] (a-4-3) edge node[above]{}(a-5-2);
\end{tikzpicture}
\end{center}

Since $\cX'\rightarrow S'$ has a fiberwise $(\mathbb{Z}/2\mathbb{Z})$-action, also $\cX\rightarrow S$ does. The quotient of $\cX\rightarrow S$ by this $(\mathbb{Z}/2\mathbb{Z})$-action gives an object for the stack $\overline{\cJ}'$. This guarantees the existence of a morphism of stacks $\overline{\cK}'\rightarrow\overline{\cJ}'$ which induces a morphism $f:\overline{\bK}'\rightarrow\overline{\bJ}'$ of the underlying coarse moduli spaces.

The morphism $f$ is surjective because it is the identity on $\bU/\SL_2/H$. On the other hand, assume $f(p)=q$ and let $(X,\epsilon R)$ (resp. $(\PP^{1} \times \PP^{1}, \frac{1+\epsilon}{2}B)$) be the stable pair parametrized by $p$ (resp. $q$). Then $(X,\epsilon R)$ is the double cover of $(\PP^{1} \times \PP^{1}, \frac{1+\epsilon}{2}B)$, and since such double cover is unique, we have that $f$ is injective as well. Thus $f$ is bijective. 

Now consider the associated morphism $\overline{\bK} \to \overline{\bJ}$ between normalizations. Since it is a quasi-finite surjective birational morphism between normal varieties, by Zariski's Main Theorem it is an isomorphism.
\end{proof}

%---------------------------------------------------------------------------------------------------------------------------------------------------

\section{Partial desingularization of GIT quotients}
\label{partialdesingularizationofGITquotients}

%-------------------------------------------------------------------------

\subsection{Review on partial desingularization}

In this section, we recall the partial desingularization of GIT quotient developed by Kirwan in \cite{Kir85}. 

Let $G$ be a reductive group and let $X$ be a smooth projective variety equipped with an $L$-linearized $G$-action. Suppose that the stable locus $X^{s}(L)$ is nonempty. If $X$ has strictly semi-stable points, so that $X^{s}(L) \subsetneq X^{ss}(L)$, then the GIT quotient $X\git_{L}G$ may have non-finite quotient singularities. Such a singularity can be partially $G$-equivariantly resolved by using Kirwan's partial desingularization. The outcome of the partial desingularization process is a new algebraic variety $X'$ equipped with an $L'$-linearized $G$-action such that: 
\begin{enumerate}
\item There is a dominant projective birational map $X'\git_{L'}G \to X \git_{L}G$;
\item $X' \git_{L'}G$ has finite quotient singularities only.
\end{enumerate}

Assume that $X^{ss}(L) \ne X^{s}(L)$. Let $Y$ be the closed $G$-invariant subvariety of $X^{ss}(L)$ with a maximal dimensional stabilizer group. Let $\widetilde{X}$ be the blow-up of $X^{ss}(L)$ along $Y$ and let $\pi : \widetilde{X} \to X^{ss}(L)$ be the blow-up morphism. Let $E$ be the exceptional divisor. $\widetilde{X}$ is a smooth quasi-projective variety since $Y$ is smooth. For a small $\epsilon > 0$, $L_{\epsilon} := \pi^{*}L \otimes \cO(-\epsilon E)$ is an ample line bundle on $\widetilde{X}$ and the $G$-action is naturally extended to $\widetilde{X}$ and to $L_{\epsilon}$.

The stable and semi-stable loci of $X$ and $\widetilde{X}$ are related as follows:
\[
	\pi^{-1}X^{s}(L) \subseteq \widetilde{X}^{s}(L_{\epsilon})
	\subseteq \widetilde{X}^{ss}(L_{\epsilon}) \subseteq
	\pi^{-1}X^{ss}(L).
\]
Thus there is a natural $G$-equivariant morphism $\widetilde{X}^{ss}(L_{\epsilon}) \to X^{ss}(L)$ which induces a morphism $\overline{\pi} : \widetilde{X}\git_{L_{\epsilon}}G \to X \git_{L}G$ between quotients. On $\widetilde{X}$, a point $x \in \widetilde{X}$ is unstable if the orbit of $\pi(x)$ is not closed in $X^{ss}(L)$ \cite[Lemma 6.6]{Kir85}. Furthermore, the maximal dimension of the stabilizer group strictly decreases. After replacing $X$ by $\widetilde{X}^{ss}$ and $L$ by $L_{\epsilon}$, we can continue this process and it terminates. The result of this procedure is $X'$ and $L'$. 

%-------------------------------------------------------------------------

\subsection{Partial resolution of the parameter space of eight points on the projective line}\label{ssec:P18}

Let $\bX_{0} := (\PP^{1})^{8}$ equipped with a diagonal $\SL_{2}$-action. Let $L = \cO(1, \ldots, 1)$ be the symmetric linearization. We explicitly describe the partial desingularization of $\bX_{0}\git_{L}\SL_{2}$. For the detail, see \cite[\S\,9]{Kir85}. 

Recall that for a point configuration $\bp \in \bX_{0}$, $\bp \in \bX_{0}^{ss}$ if and only if at most four points collide and $\bp \in \bX_{0}^{s}$ if and only if at most three points collide.

If ${[8] \choose 4}$ denotes the set of $4$-element subsets of $[8]:=\{1,\ldots,8\}$, for any $I \in {[8] \choose 4}$, let $\Delta_{I} := \{\bp = (p_{i}) \in \bX_{0}^{ss} \;|\; p_{i} = p_{j} \mbox{ for all } i, j \in I\}$. It is a five dimensional smooth subvariety of $\bX_{0}^{ss}$. The set of strictly semi-stable points is
\[
	\bX_{0}^{ss} \setminus \bX_{0}^{s} = 
	\bigcup_{I \in {[8] \choose 4}}\Delta_{I}.
\]
For $I \in {[8] \choose 4}$, let $\Delta_{I, I^{c}} := \Delta_{I} \cap \Delta_{I^{c}}$. Note that $\Delta_{I, I^{c}} \cong (\PP^{1})^{2}\setminus \PP^{1}$, where the last $\PP^{1}$ is the diagonal. For any $J \ne I, I^{c}$, we have that $\Delta_{I, I^{c}} \cap \Delta_{J, J^{c}} = \emptyset$. So if we let $\Delta_{4, 4} = \bigcup_{I \in {[8] \choose 4}}\Delta_{I, I^{c}}$, then $\Delta_{4,4}$ is a disjoint union of 35 copies of two dimensional smooth closed subvarieties of $\bX_{0}^{ss}$. $\Delta_{4, 4}$ is precisely the set of semi-stable points with positive dimensional stabilizer, which is isomorphic to $\CC^{*}$. 

Let $\bX_{1}$ be the blow-up of $\bX_{0}^{ss}$ along $\Delta_{4, 4}$. Let $E_{I, I^{c}}$ be the exceptional divisor over $\Delta_{I, I^{c}}$. For each $\Delta_{I}$, let $\widetilde{\Delta}_{I}$ be the proper transform of $\Delta_{I}$. Then $\bX_{1}$ is a smooth variety equipped with a linearized $\SL_{2}$-action. Let $\rho : \bX_{1} \to \bX_{0}^{ss}$ be the blow-up morphism. By the recipe of the partial desingularization, it is straightforward to check the following:
\begin{enumerate}
\item If $\rho(\bp) \in \bX_{0}^{s}$, then $\bp$ is stable;
\item If $\bp \in E_{I, I^{c}} \setminus (\widetilde{\Delta}_{I} \cup \widetilde{\Delta}_{I^{c}})$, then $\bp$ is stable;
\item If $\bp \in \widetilde{\Delta}_{I}$, then $\bp$ is unstable. 
\end{enumerate}

In particular, $\bX_{1}^{ss} = \bX_{1}^{s}$. One may check that for every $\bp \in \bX_{1}^{s}$, the stabilizer group is isomorphic to $\{\pm 1\}$. Therefore, the GIT quotient $\bX_{1}^{s}/\SL_{2}$ is a smooth variety. 

\begin{definition}\label{def:P}
Let $\bP$ be the partial desingularization of $(\PP^{1})^{8}\git \SL_{2}$ with symmetric linearization, that is, $\bP := \bX_{1}^{s}/\SL_{2}$.
\end{definition}

Note that both $\bX_{0}$ and $\bX_{1}$ have $\bU$ as an open subset. So there is an open embedding $\bU/\SL_{2} \hookrightarrow \bP = \bX_{1}^{s}/\SL_{2}$.

\begin{remark}\label{rmk:modulimeaningofP}

\

\begin{enumerate}
\item By \cite[Theorem 1.1]{KM11}, $\bP$ is isomorphic to $\overline{\rM}_{0, \left(\frac{1}{4}+\epsilon\right)^{8}}$, the moduli space of stable rational curves with eight marked points of weight $\frac{1}{4}+\epsilon$ (see \cite{Has03}).
\item An irreducible component $\overline{E}_{I, I^{c}}$ of the exceptional divisor of $\bar{\pi} : \bP \to (\PP^{1})^{8}\git \SL_{2}$ is isomorphic to $\PP^{2} \times \PP^{2}$. Indeed, because $\SL_{2}$ acts on $\Delta_{I,I^c}$ transitively with stabilizer group $\CC^{*}$, the exceptional set is isomorphic to $\pi^{-1}(\Delta_{I,I^c})\git \SL_{2} \cong \PP^{5}\git \CC^{*}$. Here $\PP^{5} \cong \PP(\cN_{\Delta_{I,I^c}/\bX_{0}^{ss}}|_x)$ for some $x \in \Delta_{I,I^c}$. One can check that the weight decomposition of $\cN_{\Delta_{I,I^c}/\bX_{0}^{ss}}|_x$ is $(2)^{3}\oplus (-2)^{3}$. Thus the GIT quotient $\PP^{5}\git \CC^{*}$ is isomorphic to $\PP^{2} \times \PP^{2}$. 

Alternatively, we may adopt the moduli theoretic meaning (again, \cite[Theorem 1.1]{KM11}) to obtain the same result: $\overline{E}_{I, I^{c}} \cong \overline{\rM}_{0, \left(1, (\frac{1}{4}+\epsilon)^{4}\right)} \times \overline{\rM}_{0, \left(1, (\frac{1}{4}+\epsilon)^{4}\right)} \cong \PP^{2} \times \PP^{2}$. 
\end{enumerate}
\end{remark}

%---------------------------------------------------------------------------------------------------------------------------------------------------

\section{Explicit calculations of stable replacements}
\label{explicitcalculationofstablereplacement}

The first step toward the proof of Theorem \ref{thm:mainthm} is the construction of an extension $(\widetilde{\cY}, \frac{1+\epsilon}{2}\widetilde{\cB}) \to \bX_{1}^{s}$ of the family of pairs $(\cY, \frac{1+\epsilon}{2}\cB) \to \bU$, where recall $\cY=\mathbf{U}\times(\PP^1\times\PP^1)$ and we set $\mathcal{B}:=(\mathcal{C}+\mathcal{L}_0+\mathcal{L}_1)|_{\bU}$ (see Definition~\ref{def:KSBAcomp}). This would induce a functorial morphism $\bX_{1}^{s} \to \overline{\bK}$. The above extension is obtained by modifying $(\cY_1,\frac{1+\epsilon}{2}\cB_1)\to\bX_{1}^{s}$, where $\cY_1:=\bX_{1}^{s}\times(\PP^1\times\PP^1)$ and $\cB_1$ is obtained by pulling-back $\mathcal{C}+\mathcal{L}_0+\mathcal{L}_1$ under the appropriate morphism. We postpone the global analysis of this modification to \S\,\ref{proofofmainthm}. In this section, we describe how the modification of the family goes with concrete examples of one-parameter degenerations. 

Throughout the whole section we adopt the following notation. For a stable pair $(X,B)$, if $\nu : \amalg_iX_i\rightarrow X$ is the normalization map, then we denote by $D_{X_i}$ the conductor divisor on $X_i$. We use the letter $P$ to denote $\PP^1\times\PP^1$, and $\Delta$ to denote the germ of a curve whose uniformizing parameter is $t$. On $P\times\Delta$, we let $\cL_0,\cL_1$ denote the divisors $V(y_0),V(y_1)$ respectively. For simplicity of notation, we denote the fibers $(\cL_0)_0,(\cL_1)_0$ over $t=0$ by $L_0,L_1$ respectively. $\cB$ denotes the divisor $\cC+\cL_0+\cL_1$, where $\cC$ is appropriately defined in each one of the examples that follow, and it depends on the choice of eight general complex numbers $\vec{\lambda} := (\lambda_{1}, \ldots, \lambda_{8}) \in \CC^{8} \subseteq (\PP^{1})^{8}$.

%-------------------------------------------------------------------------

\subsection{Semi-stable points of \protect{\textsf{type a}}}
\label{stablereplacementtypea}

\begin{example}
\label{typeafirstexample}
Consider the one-parameter family of divisors $\cC$ on $P \times \Delta$ given by
\begin{equation*}
\begin{split}
	y_0^2(x_{0} - t\lambda_{1}x_1)(x_{0} - t\lambda_{2}x_1)&
	(x_{0} - \lambda_{3}x_1)(x_{0} - \lambda_{4}x_1) \\
	+ \;
	y_{1}^{2}&(x_{0} - t\lambda_{5}x_1)(x_{0} - t\lambda_{6}x_1)
	(x_{0} - \lambda_{7}x_1)(x_{0} -\lambda_{8}x_1) = 0,
\end{split}
\end{equation*}
which is associated to $\vec{\lambda}_{t} = (t\lambda_{1}, t\lambda_{2}, \lambda_{3}, \lambda_{4}, t\lambda_{5}, t\lambda_{6}, \lambda_{7}, \lambda_{8}) \in (\PP^{1})^{8}$. We have a family of pairs $(P \times \Delta, \frac{1+\epsilon}{2}\cB) \to \Delta$. When $t = 0$, $\vec{\lambda}_{0}$ is strictly semi-stable. For a general $t \ne 0$, the fiber $(P, \frac{1+\epsilon}{2}\cB_{t})$ is a stable pair, thus its double cover branched along $\cB_{t}$ is also a stable ADE K3 pair. On the other hand, note that $\left(P,\frac{1+\epsilon}{2}\cB_0\right)$ is not a stable pair because $\cB_{0} = 2L + C'+L_0+L_1$ has a double line where $L$ is the line $V(x_0)$. Here $C'$ is the $(2, 2)$ divisor given by
\[
	y_0^2(x_{0} - \lambda_{3}x_1)(x_{0} - \lambda_{4}x_1) + 
	y_{1}^{2}(x_{0} - \lambda_{7}x_1)(x_{0} -\lambda_{8}x_1) = 0.
\]
The intersection $L \cap C'$ consists of the two distinct solutions of $y_0^2\lambda_{3}\lambda_{4}+y_{1}^{2}\lambda_{7}\lambda_{8}= 0$ (recall we assumed that the $\lambda_i$ are general).

We blow-up the double locus $x_{0} = t = 0$. Let $E$ be the exceptional divisor, which is isomorphic to $P$. Let $C''$ denote the restriction to $E$ of the strict transform of $\cC$. On the affine patch $x_1y_0\neq0$, the equation of $C''$ is given by the smallest degree terms with respect to $x_{0}$ and $t$ in the equation for $\cC$. Note that $y_{1}$ is regarded as a constant during this computation. Thus, after homogenizing back, $C''$ is the $(2,2)$ curve on $E$ given by
\begin{equation*}
	y_{0}^{2}(x_{0} - t\lambda_{1})(x_{0} - t\lambda_{2})
	\lambda_{3}\lambda_{4} + 
	y_{1}^{2}(x_{0} - t\lambda_{5})(x_{0} -t\lambda_{6})
	\lambda_{7}\lambda_{8} = 0,
\end{equation*}
where $([x_0:t],[y_0:y_1])$ are the coordinates of $E$. The restriction of the strict transform of $\cL_{0}+\cL_{1}$ to $E$ also consists of the two lines $y_0=0$ and $y_1=0$. The resulting limit is described in Figure \ref{fig:typeageneral}. The central fiber is semi-log canonical, and one may check that 
\[
	K_P + D_P + \frac{1+\epsilon}{2}(L_0+L_1+C')
	\sim (-2,-2)+(1,0)+\frac{1+\epsilon}{2}(2,4) 
	=\epsilon(1,2),
\]
which is ample. By symmetry, this is enough to show that the limit is stable.

To conclude this example, observe that $\vec{\lambda}_{0}$ is discarded in the partial desingularization process. However, the study we carried out is preliminary to the next Example~\ref{typeasecondexample}.
\end{example}

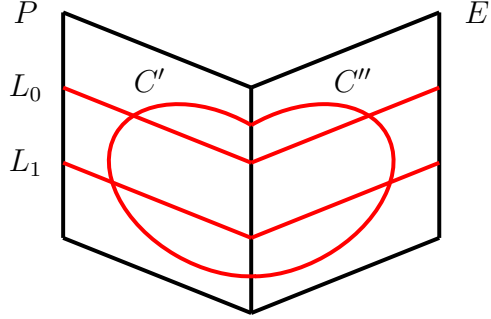
\begin{figure}[!ht]
\begin{tikzpicture}[scale=0.5]
	\draw[line width=1.5pt] (5,0) -- (5,6);
	\draw[line width=1.5pt] (0,2) -- (0,8);
	\draw[line width=1.5pt] (10,2) -- (10,8);

	\draw[line width=1.5pt] (5, 0) -- (10, 2);
	\draw[line width=1.5pt] (5,0) -- (0, 2);

	\draw[line width=1.5pt,red] (5, 2) -- (10, 4);
	\draw[line width=1.5pt,red] (5,2) -- (0, 4);
	
	\draw[line width=1.5pt,red] (5, 4) -- (10, 6);
	\draw[line width=1.5pt,red] (5,4) -- (0, 6);
	
	\draw[line width=1.5pt] (5, 6) -- (10, 8);
	\draw[line width=1.5pt] (5, 6) -- (0, 8);
	
	\draw[line width=1.5pt,rotate around={30:(5,5)},red] (5,3) [partial ellipse=-130:90:3cm and 2cm];
	\draw[line width=1.5pt,rotate around={-30:(5,5)},red] (5,3) [partial ellipse=90:315:3cm and 2cm];
	
	\node at (-1,8) {$P$};
	\node at (11,8) {$E$};
	\node at (2.3,6.2) {$C'$};
	\node at (7.7,6.2) {$C''$};
	\node at (-1,6) {$L_0$};
	\node at (-1,4) {$L_1$};
\end{tikzpicture}
\caption{Stable limit pair in Example~\ref{typeafirstexample}.}\label{fig:typeageneral}
\end{figure}

\begin{example}
\label{typeasecondexample}
Consider the one-parameter family of divisors $\cC$ on $P \times \Delta$ given by
\begin{equation*}
\begin{split}
	y_0^2(x_{0} - t\lambda_{1}x_1)(x_{0} - t\lambda_{2}x_1)&
	(tx_{0} - \lambda_{3}x_1)(tx_{0} - \lambda_{4}x_1) \\
	+ \;
	y_{1}^{2}&(x_{0} - t\lambda_{5}x_1)(x_{0} - t\lambda_{6}x_1)
	(tx_{0} - \lambda_{7}x_1)(tx_{0} -\lambda_{8}x_1) = 0,
\end{split}
\end{equation*}
whose special fiber $\cC_{0}$ is 
\[
	(\lambda_3\lambda_4y_{0}^{2}+\lambda_7\lambda_8y_{1}^{2})x_{0}^{2}x_{1}^{2} = 0.
\]
In this case, $\vec{\lambda}_{0}$ is a strictly semi-stable point with a closed orbit. Thus the normal direction $\vec{\lambda}_{t}$ corresponds to a stable point on the exceptional divisor of the partial desingularization. When $t = 0$, $\cB_{0}$ is the union of four distinct horizontal lines $y_{0} = 0$, $y_{1} = 0$, $y_{0} = \pm \sqrt{-\lambda_7\lambda_8/(\lambda_3\lambda_4)} y_{1}$, and two non-reduced vertical lines $x_{0} = 0$, $x_{1} = 0$ with multiplicity two. 

As we did in Example~\ref{typeafirstexample}, let $E_0$ (resp. $E_1$) be the exceptional divisor of the blow-up along $x_0=t=0$ (resp. $x_1=t=0$). Observe that these two exceptional divisors are isomorphic to $P$. Let $\widetilde{\cC}$ be the strict transform of $\cC$. On $E_0$, which has coordinates $([x_0:t],[y_0:y_1])$, the restriction of $\widetilde{\cC}$ has equation
\[
C_0 : y_0^2(x_0-t\lambda_1)(x_0-t\lambda_2)\lambda_3\lambda_4+y_1^2(x_0-t\lambda_5)(x_0-t\lambda_6)\lambda_7\lambda_8=0.
\]
Similarly, on $E_1$, which has coordinates $([t:x_1],[y_0:y_1])$, the restriction of $\widetilde{\cC}$ has equation
\[
C_1 : y_0^2(t-\lambda_3x_1)(t-\lambda_4x_1)+y_1^2(t-\lambda_7x_1)(t-\lambda_8x_1)=0.
\]
The limit pair is pictured in Figure \ref{fig:ssreplacementtypea}. On the central component, 
\[
	K_P+D_P+\frac{1+\epsilon}{2}B_P
	=(-2,-2)+(2,0)+\frac{1+\epsilon}{2}(0,4)=\epsilon(0,2),
\] 
which is not ample. Therefore, we need to contract $P$ in the central fiber horizontally, obtaining a surface with two components $E_0$ and $E_1$. The resulting stable pair is the same to that in Figure \ref{fig:typeageneral}. 
\end{example}

\begin{figure}[!ht]
\begin{tikzpicture}[scale=0.5]
	\draw[line width=1.5pt] (5,0) -- (5,6);
	\draw[line width=1.5pt] (10,0) -- (10,6);
	\draw[line width=1.5pt] (4.98,0) -- (10.02,0);
	\draw[line width=1.5pt] (5,6) -- (10,6);
	\draw[line width=1.5pt] (0,2) -- (0,8);
	\draw[line width=1.5pt] (15,2) -- (15,8);

	\draw[line width=1.5pt] (10, 0) -- (15, 2);
	\draw[line width=1.5pt] (5,0) -- (0, 2);

	\draw[line width=1.5pt,red] (10, 2) -- (15, 4);
	\draw[line width=1.5pt,red] (5,2) -- (0, 4);
	\draw[line width=1.5pt,red] (4.98,2) -- (10.02,2);
	\draw[line width=1.5pt,red] (4.98,4) -- (10.02,4);
	\draw[line width=1.5pt,red] (4.98,5) -- (10.02, 5);
	\draw[line width=1.5pt,red] (5,0.97) -- (10, 0.97);
	
	\draw[line width=1.5pt,red] (10, 4) -- (15, 6);
	\draw[line width=1.5pt,red] (5,4) -- (0, 6);
	
	\draw[line width=1.5pt] (10, 6) -- (15, 8);
	\draw[line width=1.5pt] (5, 6) -- (0, 8);
	
	\draw[line width=1.5pt,rotate around={30:(10,5)},red] (10,3) [partial ellipse=-133:90:3cm and 2cm];
	\draw[line width=1.5pt,rotate around={-30:(5,5)},red] (5,3) [partial ellipse=90:314:3cm and 2cm];
	
	\node at (-1,8) {$E_{0}$};
	\node at (16,8) {$E_{1}$};
	\node at (7.5, 7) {$P$};
	\node at (-1,6) {$L_0$};
	\node at (-1,4) {$L_1$};
\end{tikzpicture}
\caption{Semi-log canonical limit pair in Example~\ref{typeasecondexample}. The stable limit is obtained after contracting horizontally the middle component.}\label{fig:ssreplacementtypea}
\end{figure}
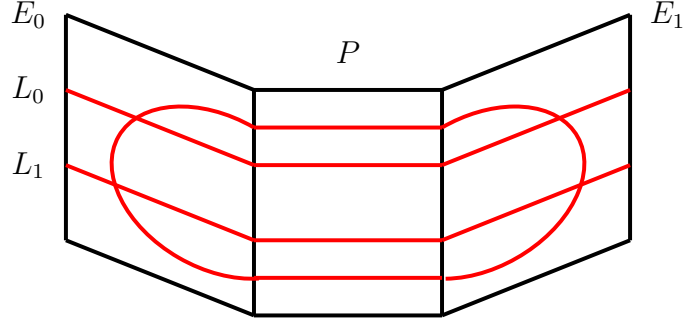

\begin{remark}\label{rmk:limitcovertypea}
Let us describe the double cover $\pi : X=X_{0}\cup X_{1}\rightarrow E_0\cup E_1$ of the degeneration in Example~\ref{typeasecondexample}. Consider the double cover $\pi_{0} : X_{0}\rightarrow E_{0}$, whose branch divisor is of class $(2,4)$ and has four isolated singularities of type $A_1$. We have that 
\[
	K_{X_{0}}=\pi_{0}^*(K_{E_{0}}+(1,2))=\pi_{0}^*(-1,0),
\] 
which implies that $-K_{X_{0}}$ is nef and $K_{X_{0}}^2=0$. From $\pi_{0}{}_*\cO_{X_{0}} \cong \cO_{X_{0}}\oplus \cO_{X_{0}}(-1, -2)$ we can argue that $\rH^{1}(\cO_{X_{0}}) = 0$. Moreover, by the projection formula, we have that
\[
\pi_{0}{}_*\cO_{X_0}(2K_{X_0})=\pi_{0}{}_*\pi_{0}^{*}\cO_{X_{0}}(-2, 0) \cong  \cO_{X_{0}}(-2, 0) \oplus \cO_{X_{0}}(-3, -2),
\]
so $\rH^0(\mathcal{O}_{X_{0}}(2K_{X_{0}}))=0$. Therefore, by Castelnuovo's rationality criterion, $X_{0}$ is rational. Note that $X_0$ has an elliptic fibration induced by the projection on $E_0$ given by $([x_0:t],[y_0:y_1])\mapsto[x_0:t]$. The same calculations work for $X_1$. Observe that the gluing locus $X_{0}\cap X_{1}$ is a genus one curve.
\end{remark}

In Example~\ref{typeasecondexample} we started with eight general points. Note that the same construction is valid for all the degenerate point configurations of interest, as we clarify in the following lemma. 

\begin{lemma}\label{lem:typealc}
Consider a semi-stable choice of $\lambda_1,\ldots,\lambda_8\in\mathbb{C}^8\subseteq(\mathbb{P}^1)^8$ such that $\lambda_{3}\lambda_{4}\lambda_{7}\lambda_{8}\neq0$, at least one of $\lambda_1,\lambda_2,\lambda_5,\lambda_6$ is different from the others, and at least one of $\lambda_3,\lambda_4,\lambda_7,\lambda_8$ is different from the others. Then the irreducible components $(E_i, \frac{1+\epsilon}{2}(C_i + L_{0}+L_{1})+D_{E_i})$ of the limit pair are stable, $i=0,1$.
\end{lemma}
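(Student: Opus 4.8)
The plan is to verify the two defining conditions of a stable pair for each $(E_i,\frac{1+\epsilon}{2}(C_i+L_0+L_1)+D_{E_i})$ separately: ampleness of the log canonical divisor, and semi-log canonicity. Since each $E_i\cong\PP^1\times\PP^1$ is smooth, semi-log canonicity is the same as log canonicity, so demi-normality is automatic and only the singularities of the boundary need to be controlled. I will use throughout that on $E_i$ the conductor $D_{E_i}$ is a fiber of one of the two rulings (class $(1,0)$), that $C_i$ has class $(2,2)$, and that $L_0+L_1$ has class $(0,2)$.

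Ampleness is immediate and does not use the genericity hypotheses: the computation is identical to the one carried out in Example~\ref{typeafirstexample}, namely
\[
K_{E_i}+D_{E_i}+\tfrac{1+\epsilon}{2}(C_i+L_0+L_1)=(-2,-2)+(1,0)+\tfrac{1+\epsilon}{2}(2,4)=\epsilon(1,2),
\]
which has positive bidegree and is therefore ample. Thus the entire content of the lemma is the verification of log canonicity, and this is where the three hypotheses enter.

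The first step is to read off the local structure of $C_i$ from the hypotheses. The condition $\lambda_3\lambda_4\lambda_7\lambda_8\neq0$ guarantees that the two leading coefficients of the defining equation of $C_i$ in the ruling containing $D_{E_i}$ are nonzero, so $C_i$ genuinely has class $(2,2)$ and, moreover, neither $L_0$ nor $L_1$ is a component of $C_i$. The condition that not all of $\lambda_1,\lambda_2,\lambda_5,\lambda_6$ (resp.\ $\lambda_3,\lambda_4,\lambda_7,\lambda_8$) coincide guarantees that $C_i$ is reduced. Projecting $C_i$ to the ruling $\PP^1$ by forgetting $[y_0:y_1]$ is a degree-two map, and I would use this to show that $\mathrm{mult}_pC_i\le2$ for every $p$: the only degenerations allowed by the hypotheses are an irreducible nodal or cuspidal $(2,2)$ curve, a vertical line together with a residual $(1,2)$ curve, or---precisely when $\{\lambda_1,\lambda_2\}=\{\lambda_5,\lambda_6\}$ (resp.\ $\{\lambda_3,\lambda_4\}=\{\lambda_7,\lambda_8\}$)---a grid of two vertical and two horizontal lines, and in each of these cases the multiplicity of $C_i$ at any point is at most two.

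With this in hand, log canonicity is checked in two regions. Near the conductor, $\lambda_3\lambda_4\lambda_7\lambda_8\neq0$ forces $C_i$ to meet $D_{E_i}$ in two distinct points, smoothly and transversally, and to avoid the two points $D_{E_i}\cap L_0$ and $D_{E_i}\cap L_1$; hence every point of $D_{E_i}$ lies on at most one further branch, the boundary is simple normal crossing there with coefficients $1$ and $\frac{1+\epsilon}{2}$, and the pair is log canonical. Away from $D_{E_i}$ only the coefficient-$\frac{1+\epsilon}{2}$ curves $C_i,L_0,L_1$ occur; since $L_0\cap L_1=\varnothing$ and $\mathrm{mult}_pC_i\le2$, at most three branches pass through any point, with total coefficient $\frac{3(1+\epsilon)}{2}<2$ for $\epsilon$ small, and the only tangencies that arise (between the two branches meeting $L_j$ at a node of $C_i$, or between the vertical line and the residual $(1,2)$ curve) have contact order two. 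A short iterated blow-up then shows that all log discrepancies exceed $-1$, so the pair is log canonical. The main obstacle, and the reason the hypotheses are phrased as they are, is precisely to prevent the coefficient-one conductor $D_{E_i}$ from ever sitting at a non-normal-crossing point: any such point would raise the total coefficient to at least $2$ and destroy log canonicity. The hypothesis $\lambda_3\lambda_4\lambda_7\lambda_8\neq0$ rules this out, while the reducedness hypotheses cap $\mathrm{mult}_pC_i$ at two so that four coefficient-$\frac{1+\epsilon}{2}$ branches can never collide.
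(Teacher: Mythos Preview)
Your proposal is correct and follows essentially the same approach as the paper: reduce to log canonicity (ampleness being the $\epsilon(1,2)$ computation already carried out), then control the singularities of $C_i+L_0+L_1$ away from the conductor and check transversality along $D_{E_i}$. The only difference is cosmetic: the paper organizes the check as an explicit case analysis on the coincidences among $\lambda_1,\lambda_2,\lambda_5,\lambda_6$ and matches each configuration to a singularity type in Table~\ref{tbl:degptconf}, whereas you argue uniformly via the bound $\mathrm{mult}_p C_i\le 2$; note that your list of degenerations misses the case $\lambda_1=\lambda_2$, $\lambda_5=\lambda_6$, $\lambda_1\neq\lambda_5$ (two $(1,1)$ curves, not a grid) and the parenthetical about tangency ``between the two branches meeting $L_j$ at a node'' is misstated, but these do not affect the conclusion.
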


\begin{remark}
\label{reasoningbehindrestrictionsonlambdas}
The reason why we exclude the cases $\lambda_{3}\lambda_{4}\lambda_{7}\lambda_{8}=0$ and $\lambda_1=\lambda_2=\lambda_5=\lambda_6$ is because, in the former case, $\vec{\lambda}_{0}$ is an unstable point configuration. In the latter case, $\vec{\lambda}_{t}$ is a curve along the strictly semi-stable locus in $(\PP^{1})^{8}$, and this case can be discarded in the partial desingularization process, as $\vec{\lambda}_{t}$ with $t \ne 0$ is unstable on the blow-up of $((\PP^{1})^{8})^{ss}$. For this same reason, we also exclude $\lambda_3=\lambda_4=\lambda_7=\lambda_8$.
\end{remark}

\begin{proof}[Proof of Lemma \ref{lem:typealc}]
Let us prove the claim for the pair corresponding to $i=0$, since the other one is analogous. We only have to prove that $(E_0, \frac{1+\epsilon}{2}(C_0 + L_{0}+L_{1})+D_{E_0})$ is log canonical. Up to symmetries, the significant cases are $\lambda_{1}, \lambda_{2}, \lambda_{5}, \lambda_{6}$ distinct (where $C_0$ is smooth), $\lambda_1=\lambda_2$, $\lambda_1=\lambda_5$, and $\lambda_1=\lambda_2=\lambda_5$. In each case we conclude that $C_0+L_{0}+L_{1}$ produces planar singularities that appear on Table \ref{tbl:degptconf} only, and on $D_{E_0}$ they do not have any double point (we omit these simple computations for brevity). So the pair $(E_0, \frac{1+\epsilon}{2}(C_0+L_{0}+L_{1})+D_{E})$ is log canonical.
\end{proof}

%-------------------------------------------------------------------------

\subsection{Semi-stable points of \textsf{type b}}

\begin{example}
\label{typebfirstexample}
Consider the family of pairs $(P \times \Delta, \frac{1+\epsilon}{2}\cB) \to \Delta$ where $\cC$ on $P\times\Delta$ is given by
\begin{equation*}
\begin{split}
	y_0^2(x_{0} - t\lambda_{1}x_1)(x_{0} - t\lambda_{2}x_1)&
	(x_{0} - t\lambda_{3}x_1)(x_{0} - \lambda_{4}x_1)\\
	+ y_{1}^{2}&(x_{0} - t\lambda_{5}x_1)(x_{0} - \lambda_{6}x_1)
	(x_{0} - \lambda_{7}x_1)(x_{0} - \lambda_{8}x_1) = 0.
\end{split}
\end{equation*}
For $t \ne 0$ the pair is stable. For $t = 0$, the only non-log canonical singularity is at $x_{0} = y_{1} = 0$, and it is locally analytically four distinct concurrent lines with weight $\frac{1+\epsilon}{2}$. So we restrict to the affine patch $x_1y_0\neq 0$. Take the blow-up of $P \times \Delta$ at $t = x_{0} = y_{1} = 0$ and let $E\cong\PP^2$ be the exceptional divisor of the blow-up. Denote by $C_E$ the restriction to $E$ of the strict transform of $\cC$. On $E$, the divisor $C_E$ has equation
\begin{equation*}
	(x_{0} - t\lambda_{1})(x_{0} - t\lambda_{2})
	(x_{0} - t\lambda_{3})\lambda_{4} 
	+ y_{1}^{2}(x_{0} - t\lambda_{5})
	\lambda_{6}\lambda_{7}\lambda_{8} = 0, 
\end{equation*}
which a smooth cubic curve if $\vec{\lambda}$ is general, which we assumed. $C$ does not have an irreducible component $V(y_{1})$. If $h$ denotes the class of a line on $E$, then
\[
	K_E + D_{E} + \frac{1+\epsilon}{2}B_E
	= \left(-3 + 1 + \frac{1+\epsilon}{2}4\right)h = 2\epsilon h,
\]
which is ample. See the left hand side of Figure \ref{fig:stablereplacementtypeb} for the limit pair. 

Let $\pi : P'\rightarrow P$ be the blow-up of the central fiber, where the exceptional divisor is the conductor $D_{P'}$ and $B_{P'}$ is the strict transform of $\cB_0$. We have $K_{P'} = \pi^{*}(-2, -2) + D_{P'}$ and $B_{P'} = \pi^{*}(4, 4) - 4D_{P'}$. We can see that the divisor $K_{P'} + D_{P'} + \frac{1+\epsilon}{2}B_{P'}$ is not ample on $P'$. Consider the proper transform of the line $V(y_{1})$, whose class is $\pi^{*}(0, 1) - D_{P'}$. It is simple to check that the intersection number $(\pi^{*}(0, 1) - D_{P'})\cdot(K_{P'} + D_{P'} + \frac{1+\epsilon}{2}B_{P'})$ equals zero. The same computation yields the intersection with the proper transform of $V(x_{0})$ is also zero. After contracting these two lines, we obtain a stable limit with two irreducible components isomorphic to $\PP^{2}$, as in the right hand side of Figure \ref{fig:stablereplacementtypeb}. 
\end{example}

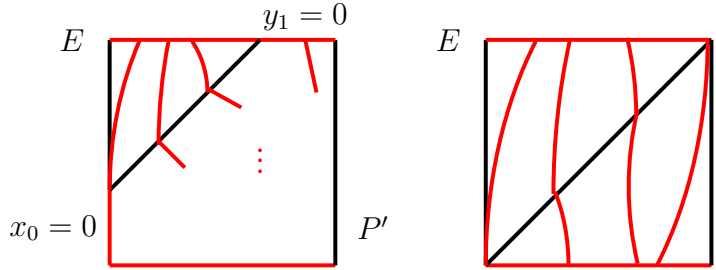
\begin{figure}[!ht]
\begin{tikzpicture}[scale=0.5]
	\draw[line width=1.5pt] (0,2) -- (0,6);
	\draw[line width=1.5pt,red] (0,0) -- (6,0);
	\draw[line width=1.5pt] (6,0) -- (6,6);
	\draw[line width=1.5pt,red] (0,6) -- (6,6);
	\draw[line width=1.5pt,red] (0,0) -- (0,2);
	\draw[line width=1.5pt] (0,2) -- (4,6);
	\draw[line width=1.5pt,red] (6,2) [partial ellipse=180:150:6 and 8];
	\draw[line width=1.5pt,red] (6.8,3.2) [partial ellipse=180:162:5.5 and 9];
	\draw[line width=1.5pt,red] (0.1, 4.6) [partial ellipse=0:35:2.5 and 2.5];
	\draw[line width=1.5pt,red] (1.3,3.3) -- (2, 2.6);
	\draw[line width=1.5pt,red] (2.6, 4.7) -- (3.5, 4.2);
	\draw[line width=1.5pt,red] (5.2, 6) -- (5.5, 4.6);
	\node at (-1,6) {$E$};
	\node at (7,1) {$P'$};
	\node at (5.2, 6.6) {$y_{1} = 0$};
	\node at (-1.5, 1) {$x_{0} = 0$};

	\draw[line width=1.5pt] (10,0) -- (10,6);
	\draw[line width=1.5pt,red] (10,0) -- (16,0);
	\draw[line width=1.5pt] (16,0) -- (16,6);
	\draw[line width=1.5pt,red] (10,6) -- (16,6);
	\draw[line width=1.5pt] (10,0) -- (16,6);
	\draw[line width=1.5pt,red] (16,0) [partial ellipse=180:141:6 and 9.5];
	\draw[line width=1.5pt,red] (16.89,1.9) [partial ellipse=180.5:156:5 and 10];
	\draw[line width=1.5pt,red] (10, 4) [partial ellipse=0:20:4 and 6];
	\draw[line width=1.5pt,red] (9.9,6) [partial ellipse=0:-39:6 and 9.5];
	\draw[line width=1.5pt,red] (17.79,2.1) [partial ellipse=161:200.1:4 and 6];
	\draw[line width=1.5pt,red] (10.1, 0) [partial ellipse=0:32:2.1 and 3.5];
	\node at (9,6) {$E$};
\end{tikzpicture}
\caption{Stable limit pair in Example~\ref{typebfirstexample}.}
\label{fig:stablereplacementtypeb}
\end{figure}

\begin{example}
\label{typebsecondexample}
Consider the family of pairs $(P \times \Delta, \frac{1+\epsilon}{2}\cB) \to \Delta$, where $\cC$ is given by 
\begin{equation*}
\begin{split}
	y_0^2(x_{0} - t\lambda_{1}x_1)(x_{0} - t\lambda_{2}x_1)&
	(x_{0} - t\lambda_{3}x_1)(tx_{0} - \lambda_{4}x_1)\\
	+ y_{1}^{2}&(x_{0} - t\lambda_{5}x_1)(tx_{0} - \lambda_{6}x_1)
	(tx_{0} - \lambda_{7}x_1)(tx_{0} - \lambda_{8}x_1) = 0.
\end{split}
\end{equation*}
Observe that the limit $8$-point configuration is a strictly semi-stable point of \textsf{type b}. On the central fiber, there are two non-log canonical singularities at $x_{0} = y_{1} = 0$ and at $x_{1} = y_{0} = 0$. By taking two blow-ups along those two points, which introduces the exceptional divisors $E_0$ and $E_1$ respectively, we have a configuration of three surfaces as in Figure \ref{fig:ssreplacementtypeb}. On $E_0$, the restriction of the strict transform of $\cC$ is given by
\[
C_0 : (x_0-t\lambda_1)(x_0-t\lambda_2)(x_0-t\lambda_3)\lambda_4+y_1^2(x_0-t\lambda_5)\lambda_6\lambda_7\lambda_8=0.
\]
On $E_1$, the restriction of the strict transform of $\cC$ is given by
\[
C_1 : y_0^2(t-\lambda_4x_1)+(t-\lambda_6x_1)(t-\lambda_7x_1)(t-\lambda_8x_1)=0.
\]
The central component $P''$ is a ruled surface and 
\[
\begin{split}
	K_{P''} + D_{P''} + \frac{1+\epsilon}{2}B_{P''}
	= &\;(\pi^{*}(-2, -2) + D_{E_0} + D_{E_1})\\
	& + D_{E_{0}} + D_{E_{1}} + \frac{1+\epsilon}{2}
	(\pi^{*}(4, 4) - 4D_{E_0} - 4D_{E_1})\\
	= &\;\epsilon(\pi^{*}(2, 2) - 2D_{E_0} - 2D_{E_1})
\end{split}
\]
is not ample. $P''$ is contracted diagonally, thus the resulting stable limit is the same as the picture on the right hand side in Figure \ref{fig:stablereplacementtypeb}.
\end{example}

\begin{figure}[!ht]
\begin{tikzpicture}[scale=0.5]
	\draw[line width=1.5pt] (0,2) -- (0,6);
	\draw[line width=1.5pt,red] (0,0) -- (6,0);
	\draw[line width=1.5pt] (6,0) -- (6,4);
	\draw[line width=1.5pt,red] (0,6) -- (6,6);
	\draw[line width=1.5pt,red] (0,0) -- (0,2);
	\draw[line width=1.5pt] (0,2) -- (4,6);
	\draw[line width=1.5pt] (2, 0) -- (6, 4);
	\draw[line width=1.5pt,red] (6, 4) -- (6,6);
	\draw[line width=1.5pt,red] (6,2) [partial ellipse=180:150:6 and 8];
	\draw[line width=1.5pt,red] (6.8,3.2) [partial ellipse=180:162:5.5 and 9];
	\draw[line width=1.5pt,red] (0.1, 4.6) [partial ellipse=0:35:2.5 and 2.5];
	\draw[line width=1.5pt,red] (0,4) [partial ellipse=0:-30:6 and 8];
	\draw[line width=1.5pt,red] (-0.8,2.8) [partial ellipse=0:-18:5.5 and 9];
	\draw[line width=1.5pt,red] (5.9, 1.4) [partial ellipse=180:215:2.5 and 2.5];
	\draw[line width=1.5pt,red] (1.3,3.3) -- (3.4, 1.3);
	\draw[line width=1.5pt,red] (2.6, 4.7) -- (4.7, 2.7);
	\node at (-1,6) {$E_{0}$};
	\node at (7,1) {$E_{1}$};
	\node at (3, 3) {$P''$};
\end{tikzpicture}
\caption{Semi-log canonical limit pair in Example~\ref{typebsecondexample}. The stable limit is obtained after contracting diagonally the middle component.}
\label{fig:ssreplacementtypeb}
\end{figure}
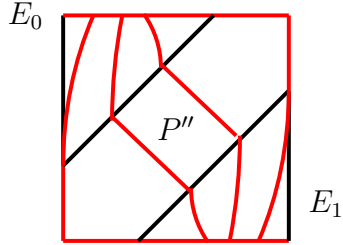

\begin{remark}
For $i=0,1$, the double cover $\pi_{i} : X_{i} \to E_i$ of an irreducible component $E_i \cong \PP^{2}$ is branched along a reducible quartic curve $B_{i}$ with three nodal singularities. Since $K_{X_{i}} = \pi_{i}^{*}K_{E_i}+ \frac{1}{2}\pi_{i}^{*}B_{i}  = -\pi_{i}^{*}h$ where $h$ is the class of a line, we have that $K_{X_i}$ is anti-ample. Since $X_{i}$ has only three $A_1$ singularities, the minimal resolution of $X_{i}$ is a weak del Pezzo surface of degree $2$. The gluing locus $X_0\cap X_1$ is a genus one curve.
\end{remark}

The same construction is valid for all the degenerate point configurations of interest in view of the partial desingularization. The reasoning behind the restrictions on the $\lambda_i$ in the next lemma is analogous to what we already explained in Remark~\ref{reasoningbehindrestrictionsonlambdas}.

\begin{lemma}\label{lem:typeblc}
Consider a semi-stable choice of $\lambda_1,\ldots,\lambda_8\in\mathbb{C}^8\subseteq(\mathbb{P}^1)^8$ such that $\lambda_{4}\lambda_{6}\lambda_{7}\lambda_{8}\neq0$, at least one of $\lambda_1,\lambda_2,\lambda_3,\lambda_5$ is different from the others, and at least one of $\lambda_4,\lambda_6,\lambda_7,\lambda_8$ is different from the others. Then the irreducible components $(E_0, \frac{1+\epsilon}{2}(C_0 + L_{1})+D_{E_0}),(E_1, \frac{1+\epsilon}{2}(C_1 + L_{0})+D_{E_1})$ of the limit pair are stable.
\end{lemma}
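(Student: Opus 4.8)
**

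The plan is to mirror the argument already carried out in Lemma~\ref{lem:typealc}, reducing the statement for each component $E_i\cong\PP^2$ to the verification of log canonicity of the pair, since the ampleness of the log canonical divisor was already established in Example~\ref{typebfirstexample} via the intersection number computation $K_E+D_E+\frac{1+\epsilon}{2}B_E=2\epsilon h$. Because the two components are interchanged under the symmetry $[y_0:y_1]\mapsto[y_1:y_0]$ together with the reciprocal substitution on the coordinate $[x_0:x_1]$, it suffices to treat $i=0$; the case $i=1$ then follows by symmetry.

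First I would fix the component $E_0$ with its branch configuration $C_0+L_1$ and conductor $D_{E_0}$, whose explicit equation is recorded in Example~\ref{typebsecondexample}, namely
\[
C_0:(x_0-t\lambda_1)(x_0-t\lambda_2)(x_0-t\lambda_3)\lambda_4+y_1^2(x_0-t\lambda_5)\lambda_6\lambda_7\lambda_8=0.
\]
Under the hypotheses $\lambda_4\lambda_6\lambda_7\lambda_8\neq0$ this is a genuine plane cubic, and the coefficients governing its singularities depend only on the coincidence pattern among $\lambda_1,\lambda_2,\lambda_3,\lambda_5$ and among $\lambda_4,\lambda_6,\lambda_7,\lambda_8$. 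I would then enumerate, up to the $S_3$ symmetry permuting $\lambda_1,\lambda_2,\lambda_3$ and the analogous symmetry on the second group, the finitely many collision types: the generic case (all distinct, $C_0$ smooth), followed by $\lambda_1=\lambda_2$, by $\lambda_1=\lambda_5$, by $\lambda_1=\lambda_2=\lambda_3$, by $\lambda_1=\lambda_2=\lambda_5$, and any remaining mixed collisions permitted by the semi-stability constraint that no five of the original eight points coincide.

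For each such case I would carry out a local analytic computation at the points where $C_0$, $L_1$, and $D_{E_0}$ meet, exactly as in the proof of Lemma~\ref{lem:typealc}. The key point to check is that every resulting planar singularity of $C_0+L_1$ is one of those appearing in Table~\ref{tbl:degptconf} (so that the pair with coefficient $\frac{1+\epsilon}{2}$ is log canonical by the same inspection used for the ADE cases), and that the conductor $D_{E_0}$—which is the exceptional line of the original blow-up—does not acquire a double point of the branch divisor along it, so that it contributes no worse-than-nodal behavior to the log canonical threshold. Since $\frac{1+\epsilon}{2}$ is below the threshold for simple-normal-crossing and for the ADE-type planar singularities listed, log canonicity follows in every case.

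The main obstacle I anticipate is the bookkeeping rather than any conceptual difficulty: one must be careful that the coefficient $\frac{1+\epsilon}{2}$ on $C_0+L_1$ really does keep the worst permitted collision (three concurrent branches, or a tacnode-type meeting where $C_0$ is tangent to $L_1$) inside the log canonical range, and that no previously unseen singularity type arises when several $\lambda_i$ collide simultaneously on $E_0$ in a way that does not occur on the central fiber of the original family. Given that these are precisely the configurations tabulated in Table~\ref{tbl:degptconf} and that the semi-stability hypotheses exclude a fifth coincident point, I expect each verification to reduce to a short Newton-polygon or explicit blow-up computation, which—following the paper's convention in Lemma~\ref{lem:typealc}—can be omitted for brevity once the exhaustive list of cases is seen to contain only admissible singularities.
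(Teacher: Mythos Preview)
Your proposal is correct and follows essentially the same route as the paper: reduce to one component by symmetry, note that ampleness was already computed in Example~\ref{typebfirstexample}, and then verify log canonicity by enumerating the permitted collision patterns among $\lambda_1,\lambda_2,\lambda_3,\lambda_5$ and checking that $C_0+L_1$ acquires only the planar singularities from Table~\ref{tbl:degptconf} while remaining transverse to the conductor $D_{E_0}=V(t)$. The paper's proof is terser---it simply asserts that along $D_{E_0}$ the divisor $C_0+L_1$ has no singularities at all and that elsewhere only Table~\ref{tbl:degptconf} types occur---but your more explicit case list ($\lambda_1=\lambda_2$, $\lambda_1=\lambda_5$, $\lambda_1=\lambda_2=\lambda_3$, $\lambda_1=\lambda_2=\lambda_5$) is exactly what that assertion unpacks to.
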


\begin{proof}
By symmetry, it is enough to check the log canonicity of $(E_0, \frac{1+\epsilon}{2}(C_0 + L_{1})+D_{E_0})$. By checking degenerate point configurations, under the assumption that one of $\lambda_{1}, \lambda_{2}, \lambda_{3}, \lambda_{5}$ is distinct from the others, one may conclude that $(C_0+L_{1})$ has only singularities that appear on Table \ref{tbl:degptconf}, and along $D_{E} = V(t)$ it has no singularities at all. So the pair $(E_0, \frac{1+\epsilon}{2}(C_0 + L_{1})+D_{E_0})$ is log canonical. 
\end{proof}

%-------------------------------------------------------------------------

\subsection{Semi-stable points of \textsf{type c}}
\label{stablereplacementtypec}

\begin{example}
\label{typecfirstexample}
Consider the one-parameter family of divisors $\cC$ on $\cP := P\times\Delta$ given by
\begin{equation*}
	y_0^2\prod_{i=1}^4(x_0-t\lambda_ix_1)+
	y_1^2\prod_{i=5}^8(x_0-\lambda_ix_1)=0.
\end{equation*}
The divisor $\cB_0$ has five multiple points, but only $([0:1],[1:0])$ is not a simple double point. In the affine patch $x_1y_0\neq0$, locally at $(0,0)$, the singularity of $\cB_0$ is isomorphic to $y_1(x_0^4+y_1^2)=0$, which is not log canonical for the weight $(1+\epsilon)/2$. To find the stable replacement, let us restrict to the affine patch $x_1y_0\neq0$, so that the equation for $\cB$ becomes
\[
	y_{1}\left(\prod_{i=1}^4(x_0-t\lambda_i)+
	y_1^2\prod_{i=5}^8(x_0-\lambda_i)\right)=0.
\]
We perform the following birational modifications (see Figure~\ref{case5}).
\begin{enumerate}
\item Let $\cP'\rightarrow\cP$ be the weighted blow-up at $(t,x_0,y_1) = (0,0,0)$ with weights $(1,1,2)$, which is the blow-up of the ideal $(t^2,tx_0,x_0^2,y_1)$. The exceptional divisor $E_1$ is isomorphic to the weighted projective plane $\PP(1,1,2)$, which is isomorphic to $\mathbb{F}_2^0$, a cone over a smooth conic. This weighted blow-up introduces an $A_1$ singularity on each irreducible component of $\cP'_{0}$. Let $\cB'$ be the strict transform of the divisor $\cB$. Observe that the equation for $\cB'_{E_{1}}$, which is the smallest degree part with respect to $x_0, y_1, t$ of the equation for $\cB$, is given by
\begin{equation*}
	y_{1}\left(\prod_{i=1}^4(x_0-t\lambda_i)+
	y_1^2\prod_{i=5}^8\lambda_i\right)=0.
\end{equation*}
It is the union of a $2$-section and a section of $\mathbb{F}_2^0$. 

\item The strict transform of the line $L_1=V(y_1)$ on $P$ intersects $K_{\cP_0'}+\frac{1+\epsilon}{2}\cB_0'$ negatively. We flip this curve by first blowing it up, introducing an exceptional divisor $E_2$ isomorphic to $\mathbb{F}_1$.

\item Blow-up the strict transform of $L_1$ on $E_2$ introducing the exceptional divisor $E_3$ isomorphic to $\mathbb{F}_0$.

\item Contract $E_3$ along the ruling intersecting $E_2$ in the exceptional divisor. The strict transform of $E_2$ is now isomorphic to $\PP^2$.

\item Contract the strict transform of $E_2$ to a point. This introduces an $A_1$ singularity on the strict transform of $P$ and on the strict transform of $E_1$.
\end{enumerate}

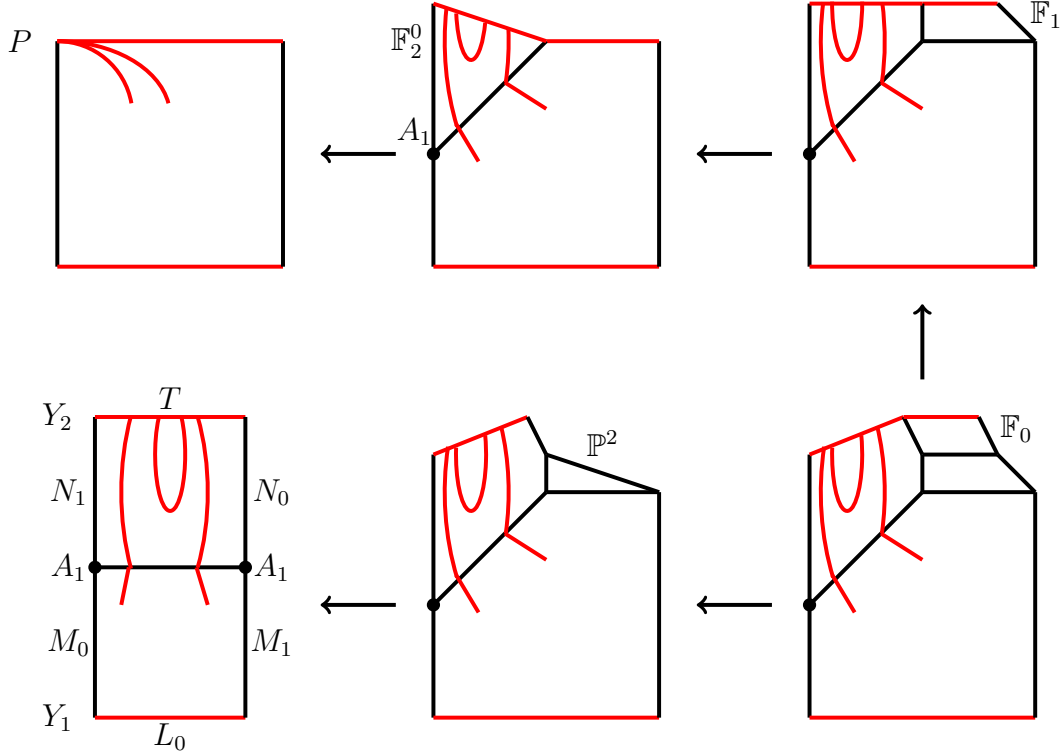
\begin{figure}[!hbt]
\begin{tikzpicture}[scale=0.5]
	\node at (-1,6) {$P$};
	\draw[line width=1.5pt] (0,0) -- (0,6);
	\draw[line width=1.5pt,red] (0,0) -- (6,0);
	\draw[line width=1.5pt] (6,0) -- (6,6);
	\draw[line width=1.5pt,red] (0,6) -- (6,6);
	\draw[line width=1.5pt,red] (0,4) [partial ellipse=10:90:3 and 2];
	\draw[line width=1.5pt,red] (0,4) [partial ellipse=10:90:2 and 2];

	\draw[->][line width=1.5pt] (9, 3) -- (7, 3);

	\node at (9.3,6) {$\mathbb{F}_2^0$};
	\draw[line width=1.5pt] (10,0) -- (10,3);
	\draw[line width=1.5pt,red] (10,0) -- (16,0);
	\draw[line width=1.5pt] (16,0) -- (16,6);
	\draw[line width=1.5pt,red] (13,6) -- (16,6);
	\draw[line width=1.5pt] (10,3) -- (13,6);
	\fill (10,3) circle (5pt);
	\node at (9.5,3.6) {$A_1$};
	\draw[line width=1.5pt] (10,3) -- (10,7);
	\draw[line width=1.5pt,red] (10,7) -- (13,6);
	\draw[line width=1.5pt,red] (13.3, 6) [partial ellipse=207:170:3 and 5];
	\draw[line width=1.5pt,red] (10, 6) [partial ellipse=-16:5:2 and 4];
	\draw[line width=1.5pt,red] (11, 7) [partial ellipse=190:340:0.4 and 1.5];
	\draw[line width=1.5pt,red] (11.9,4.9) -- (13,4.2);
	\draw[line width=1.5pt,red] (10.6,3.8) -- (11.2,2.8);

	\draw[->][line width=1.5pt] (19, 3) -- (17, 3);

	\node at (26.3,6.7) {$\mathbb{F}_1$};
	\draw[line width=1.5pt] (20,0) -- (20,3);
	\draw[line width=1.5pt,red] (20,0) -- (26,0);
	\draw[line width=1.5pt] (26,0) -- (26,6);
	\draw[line width=1.5pt] (23,6) -- (26,6);
	\draw[line width=1.5pt,red] (23,7) -- (25,7);
	\draw[line width=1.5pt] (23,7) -- (23,6);
	\draw[line width=1.5pt] (25,7) -- (26,6);
	\draw[line width=1.5pt] (20,3) -- (23,6);
	\fill (20,3) circle (5pt);
	\draw[line width=1.5pt] (20,3) -- (20,7);
	\draw[line width=1.5pt,red] (20,7) -- (23,7);
	\draw[line width=1.5pt,red] (23.3, 6) [partial ellipse=207:168:3 and 5];
	\draw[line width=1.5pt,red] (20, 6) [partial ellipse=-16:15:2 and 4];
	\draw[line width=1.5pt,red] (21, 7) [partial ellipse=178:362:0.4 and 1.5];
	\draw[line width=1.5pt,red] (21.9,4.9) -- (23,4.2);
	\draw[line width=1.5pt,red] (20.6,3.8) -- (21.2,2.8);
	
	\draw[->][line width=1.5pt] (23, -3) -- (23, -1);

	\node at (25.5,7-12.3+1) {$\mathbb{F}_0$};
	\draw[line width=1.5pt] (20,-12) -- (20,3-12);
	\draw[line width=1.5pt,red] (20,-12) -- (26,-12);
	\draw[line width=1.5pt] (26,-12) -- (26,6-12);
	\draw[line width=1.5pt] (23,6-12) -- (26,6-12);
	\draw[line width=1.5pt] (23,7-12) -- (25,7-12);
	\draw[line width=1.5pt] (23,7-12) -- (23,6-12);
	\draw[line width=1.5pt] (25,7-12) -- (26,6-12);
	\draw[line width=1.5pt] (20,3-12) -- (23,6-12);
	\fill (20,3-12) circle (5pt);
	\draw[line width=1.5pt] (20,3-12) -- (20,7-12);
	\draw[line width=1.5pt,red] (20,7-12) -- (23-0.5,7-12+1);
	\draw[line width=1.5pt] (23-0.5,7-12+1) -- (23,7-12);
	\draw[line width=1.5pt,red] (23-0.5,7-12+1) -- (25-0.5,7-12+1);
	\draw[line width=1.5pt] (25-0.5,7-12+1) -- (25,7-12);
	\draw[line width=1.5pt,red] (23.3, -6) [partial ellipse=207:166:3 and 5];
	\draw[line width=1.5pt,red] (20, -6) [partial ellipse=-16:26:2 and 4];
	\draw[line width=1.5pt,red] (21, -5) [partial ellipse=173:380:0.4 and 1.5];
	\draw[line width=1.5pt,red] (21.9,4.9-12) -- (23,4.2-12);
	\draw[line width=1.5pt,red] (20.6,3.8-12) -- (21.2,2.8-12);

	\draw[->][line width=1.5pt] (19, 3-12) -- (17, 3-12);

	\draw[line width=1.5pt] (10,-12) -- (10,3-12);
	\draw[line width=1.5pt,red] (10,-12) -- (16,-12);
	\draw[line width=1.5pt] (16,-12) -- (16,6-12);
	\draw[line width=1.5pt] (13,6-12) -- (16,6-12);
	\draw[line width=1.5pt] (13,7-12) -- (13,6-12);
	\draw[line width=1.5pt] (13,7-12) -- (16,6-12);
	\node at (14.5,7-12+0.3) {$\PP^2$};
	\draw[line width=1.5pt] (10,3-12) -- (13,6-12);
	\fill (10,3-12) circle (5pt);
	\draw[line width=1.5pt] (10,3-12) -- (10,7-12);
	\draw[line width=1.5pt,red] (10,7-12) -- (13-0.5,7-12+1);
	\draw[line width=1.5pt] (13-0.5,7-12+1) -- (13,7-12);
	\draw[line width=1.5pt,red] (13.3, -6) [partial ellipse=207:166:3 and 5];
	\draw[line width=1.5pt,red] (10, -6) [partial ellipse=-16:26:2 and 4];
	\draw[line width=1.5pt,red] (11, -5) [partial ellipse=173:380:0.4 and 1.5];
	\draw[line width=1.5pt,red] (11.9,4.9-12) -- (13,4.2-12);
	\draw[line width=1.5pt,red] (10.6,3.8-12) -- (11.2,2.8-12);

	\draw[->][line width=1.5pt] (9, 3-12) -- (7, 3-12);

	\draw[line width=1.5pt] (0+1,-12) -- (0+1,-12+4);
	\draw[line width=1.5pt] (0+1,4-12) -- (4+1,4-12);
	\draw[line width=1.5pt,red] (0+1,-12) -- (4+1,-12);
	\draw[line width=1.5pt] (4+1,-12) -- (4+1,4-12);
	\draw[line width=1.5pt] (0+1,-12+4) -- (0+1,-12+4+4);
	\draw[line width=1.5pt,red] (0+1,4-12+4) -- (4+1,4-12+4);
	\draw[line width=1.5pt] (4+1,-12+4) -- (4+1,4-12+4);
	\draw[line width=1.5pt,red] (4.65, -6) [partial ellipse=204:156:3 and 5];
	\draw[line width=1.5pt,red] (2, -6) [partial ellipse=-31:30:2 and 4];
	\draw[line width=1.5pt,red] (3, -5) [partial ellipse=140:400:0.4 and 1.5];
	\draw[line width=1.5pt,red] (1.91,-8) -- (1.71,-9);
	\draw[line width=1.5pt,red] (3.71,-8) -- (4.01,-9);
	\fill (0+1,4-12) circle (5pt);
	\fill (4+1,4-12) circle (5pt);
	\node at (4.7+1,4-12) {$A_1$};
	\node at (0.3,4-12) {$A_1$};
	\node at (0, -4) {$Y_{2}$};
	\node at (0, -12) {$Y_{1}$};
	\node at (0.3, -6) {$N_1$};
	\node at (5.7, -6) {$N_0$};
	\node at (3, -3.5) {$T$};
	\node at (3, -12.5) {$L_{0}$};
	\node at (0.3, -10) {$M_{0}$};
	\node at (5.7, -10) {$M_{1}$};
\end{tikzpicture}
\caption{Stable replacement in Example~\ref{typecfirstexample}.}
\label{case5}
\end{figure}

The central fiber is log canonical (it was already log canonical at step (1) -- the modification that followed did not make the singularities of the pair worse), so we only need to check that the numerical condition is satisfied. Denote by $Y_1,Y_2$ the two irreducible components of the central fiber, where $Y_1$ is the strict transform of $P$. Let $B_{Y_{i}}$ be the restriction of the strict transform of $\cB$ to $Y_{i}$ . We want to show that $K_{Y_i}+D_{Y_i}+\frac{1+\epsilon}{2}B_{Y_i}$ is ample for $i=1,2$.

\begin{enumerate}
\item On $Y_1$, let $M_0, M_1$ be the two vertical boundary lines, where $M_0^2=-\frac{1}{2}$ and $M_1^2=\frac{1}{2}$. Note that $L_0$ is the horizontal boundary line disjoint from $Y_{2}$. It is a straightforward exercise to compute the following intersection numbers (observe that $L_0$ is contained in the support of $B_{Y_1}$):
\begin{displaymath}
\begin{array}{|c|c|c|c|c|c|}
\hline
\cdot & M_0 & M_1 & L_0 & D_{Y_1} & B_{Y_1}\\
\hline
M_0&-1/2&0&1&1/2&1\\
\hline
M_1&&1/2&1&1/2&3\\
\hline
L_0&&&0&0&4\\
\hline
D_{Y_1}&&&&0&2\\
\hline
\end{array}
\end{displaymath}
Moreover, $K_{Y_1}=-M_0-M_1-L_0-D_{Y_1}$. One may check that $K_{Y_1}+D_{Y_1}+\frac{1+\epsilon}{2}B_{Y_1}$ intersects positively with $M_0$, $M_1$, $L_0$, and $D_{Y_1}$. Since $Y_1$ is a toric variety, this implies $K_{Y_1}+D_{Y_1}+\frac{1+\epsilon}{2}B_{Y_1}$ is ample.

\item On $Y_2$, let $N_0,N_1$ be the two vertical boundary lines, where $N_0^2=-\frac{1}{2}$ and $N_1^2=\frac{1}{2}$. Denote by $T$ the top boundary line. We have the following intersection numbers:
\begin{displaymath}
\begin{array}{|c|c|c|c|c|c|}
\hline
\cdot&N_0&N_1&T&D_{Y_2}&B_{Y_2}\\
\hline
N_0&-1/2&0&1&1/2&1\\
\hline
N_1&&1/2&1&1/2&3\\
\hline
T&&&0&0&4\\
\hline
D_{Y_2}&&&&0&2\\
\hline
\end{array}
\end{displaymath}
Then $K_{Y_2}=-N_0-N_1-T-D_{Y_2}$ and $K_{Y_2}+D_{Y_2}+\frac{1+\epsilon}{2}B_{Y_2}$ is ample because it intersects positively with $N_0,N_1,T,D_{Y_2}$.
\end{enumerate}

Note that two intersection matrices for $Y_{1}$ and $Y_{2}$ are same. Indeed, they are isomorphic toric surfaces. But the non-toric divisors $B_{Y_{1}}$ and $B_{Y_{2}}$ are in general different.
\end{example}

\begin{example}
\label{typecsecondexample}
Finally, consider the one-parameter family of divisors $\cC$ on $\cP := P\times\Delta$ given by
\begin{equation*}
	y_0^2\prod_{i=1}^4(x_0-t\lambda_ix_1)+
	y_1^2\prod_{i=5}^8(tx_0-\lambda_ix_1)=0.
\end{equation*}
We have that $\left(\cP_0,\frac{1+\epsilon}{2}\cB_0\right)$ is not log canonical at $([0:1],[1:0]),([1:0],[0:1])$ where the divisor is locally isomorphic to $y(x^4+y^2)$. To obtain the stable replacement we may repeat the procedure in Example~\ref{typecfirstexample} for both singularities. Let $F_0$ be the exceptional divisor of the weighted blow-up at $([0:1],[1:0])$. Then the restriction to $F_0$ of the strict transform of $\cC$ has equation
\[
C_0 : \prod_{i=1}^4(x_0-t\lambda_i)+y_1^2\prod_{i=5}^8\lambda_i=0.
\]
Similarly, if $F_1$ be the exceptional divisor of the weighted blow-up at $([1:0],[0:1])$, then the restriction to $F_1$ of the strict transform of $\cC$ has equation
\[
C_1 : y_0^2+\prod_{i=5}^8(t-\lambda_ix_1)=0.
\]

Continuing with the flip described in Example~\ref{typecfirstexample} (which we perform once for each of the two singularities), the semi-stable replacement has three irreducible components: let $Y_{1}$ be the strict transform of $P$ and let $Y_2,Y_3$ be the strict transforms of the exceptional divisors of the two exceptional divisors $F_0,F_1$ respectively (see left picture in Figure \ref{fig:sslimit6}). Observe that $Y_1$ has four $A_1$ singularities and $Y_2$ and $Y_3$ have two $A_1$ singularities each. Let $B_{Y_{i}}$ be the restriction to $Y_{i}$ of the strict transform of $\cB$.

\begin{figure}[!ht]
\begin{tikzpicture}[scale=0.5]
	\draw[line width=1.5pt] (1,-4) -- (1,8);
	\draw[line width=1.5pt] (1,4) -- (5,4);
	\draw[line width=1.5pt,red] (1,-4) -- (5,-4);
	\draw[line width=1.5pt] (5,-4) -- (5,8);
	\draw[line width=1.5pt,red] (1,8) -- (5,8);
	\draw[line width=1.5pt] (1,0) -- (5,0);
	\draw[line width=1.5pt,red] (4.6, 6) [partial ellipse=204:156:3 and 5];
	\draw[line width=1.5pt,red] (2, 6) [partial ellipse=-31:30:2 and 4];
	\draw[line width=1.5pt,red] (3, 7) [partial ellipse=140:400:0.4 and 1.5];
	\draw[line width=1.5pt,red] (1.85,4) -- (1.85,0);
	\draw[line width=1.5pt,red] (3.73,4) -- (3.73,0);
	\draw[line width=1.5pt,red] (4.6, -2) [partial ellipse=204:156:3 and 5];
	\draw[line width=1.5pt,red] (2, -2) [partial ellipse=-30:30.5:2 and 4];
	\draw[line width=1.5pt,red] (2.6, -3) [partial ellipse=-40:220:0.4 and 1.5];
	\fill (1,4) circle (5pt);
	\fill (5,4) circle (5pt);
	\fill (5,0) circle (5pt);	
	\fill (1,0) circle (5pt);
	\node at (5.7,4) {$A_1$};
	\node at (0.3,4) {$A_1$};
	\node at (0.3,0) {$A_1$};
	\node at (5.7,0) {$A_1$};
	\node at (0, 6) {$Y_{2}$};
	\node at (0, -2) {$Y_{3}$};
	\node at (3, 2) {$Y_{1}$};
	\node at (0.3, 2) {$M_{0}$};
	\node at (5.7, 2) {$M_{1}$};

	\draw[line width=1.5pt] (9,-2) -- (9,6);
	\draw[line width=1.5pt] (9,6) -- (13,6);
	\draw[line width=1.5pt,red] (9,-2) -- (13,-2);
	\draw[line width=1.5pt] (13,-2) -- (13,6);
	\draw[line width=1.5pt,red] (9,6) -- (13,6);
	\draw[line width=1.5pt] (9,2) -- (13,2);
	\draw[line width=1.5pt,red] (12.6, 4) [partial ellipse=204:156:3 and 5];
	\draw[line width=1.5pt,red] (10, 4) [partial ellipse=-30:30:2 and 4];
	\draw[line width=1.5pt,red] (11, 5) [partial ellipse=140:400:0.4 and 1.5];
	\draw[line width=1.5pt,red] (12.6, 0) [partial ellipse=204:156:3 and 5];
	\draw[line width=1.5pt,red] (10, 0) [partial ellipse=-30:30.5:2 and 4];
	\draw[line width=1.5pt,red] (10.6, -1) [partial ellipse=-40:220:0.4 and 1.5];
	\fill (13,2) circle (5pt);	
	\fill (9,2) circle (5pt);
	\node at (8.3,2) {$A_1$};
	\node at (13.7,2) {$A_1$};
	\node at (8, 4) {$Y_{2}$};
	\node at (8, 0) {$Y_{3}$};
\end{tikzpicture}
\caption{On the left, the semi-log canonical limit pair in Example~\ref{typecsecondexample}, and on the right, the corresponding stable limit obtained after contracting the middle component.}
\label{fig:sslimit6}
\end{figure}

On $Y_1$, let $M_i$ be the strict transform of the line $x_i=0$, $i=0,1$. The conductor divisor $D_{Y_1}$ consists of the two irreducible components $D_{12}:=Y_1\cap Y_2$ and $D_{13}:=Y_1\cap Y_3$. We have the following intersection numbers:
\begin{displaymath}
\begin{array}{|c|c|c|c|c|c|}
\hline
\cdot&M_0&M_1&D_{12}&D_{13}&B_{Y_1}\\
\hline
M_0&0&0&1/2&1/2&0\\
\hline
M_1&&0&1/2&1/2&0\\
\hline
D_{12}&&&0&0&2\\
\hline
D_{13}&&&&0&2\\
\hline
\end{array}
\end{displaymath}
One can compute that $K_{Y_1}=-2M_0-2D_{13}$. Therefore, $K_{Y_1} + D_{Y_1} + \frac{1+\epsilon}{2}B_{Y_1}$ intersects $M_0$ and $M_1$ trivially and $Y_{1}$ can be contracted vertically to obtain a stable pair. The resulting pair is the same as the right picture in Figure~\ref{fig:sslimit6}.
\end{example}

\begin{remark}
We compute the double cover of the degeneration in Figure \ref{fig:sslimit6} after contracting the middle component $Y_1$. Let $\pi_{2} : X_{2} \to Y_{2}$ be the double cover (analogous considerations hold for $Y_3$). Using that $B_{Y_2} = 4N_0 + 3T$ and $K_{Y_{2}} = -(N_{0}+N_{1}+T+D_{Y_2}) = -2N_0-2T$ (notice that $Y_2$ has Picard number $2$), and following the same strategy as in Remark \ref{rmk:limitcovertypea}, by Castelnuovo's rationality criterion one can conclude that $X_{2}$ is a rational surface with $K_{X_2}^2=0$, which also comes with an elliptic fibration. The limit stable surface is the union of two rational surfaces glued along a genus one curve. 
\end{remark}

As in \textsf{type a} and \textsf{type b}, we may generalize this computation to all the degenerate point configurations of interest. 

\begin{lemma}\label{lem:typeclc}
Consider a semi-stable choice of $\lambda_1,\ldots,\lambda_8\in\mathbb{C}^8\subseteq(\mathbb{P}^1)^8$ such that $\lambda_{5}\lambda_{6}\lambda_{7}\lambda_{8}\neq0$, at least one of $\lambda_1,\lambda_2,\lambda_3,\lambda_4$ is different from the others, and at least one of $\lambda_5,\lambda_6,\lambda_7,\lambda_8$ is different from the others. Then the irreducible components $(Y_2, \frac{1+\epsilon}{2}B_{Y_2}+D_{Y_2}),(Y_3, \frac{1+\epsilon}{2}B_{Y_3}+D_{Y_3})$ of the limit pair are stable.
\end{lemma}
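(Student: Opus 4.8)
The plan is to follow verbatim the strategy used for Lemma~\ref{lem:typealc} and Lemma~\ref{lem:typeblc}. First I would invoke the symmetry of \textsf{type c} exchanging the two groups $\{\lambda_1,\ldots,\lambda_4\}$ and $\{\lambda_5,\ldots,\lambda_8\}$, which interchanges the two non-log-canonical points $([0:1],[1:0])$ and $([1:0],[0:1])$ of $\cB_0$, hence the two weighted blow-up exceptional divisors $F_0,F_1$, and therefore the limit components $Y_2$ and $Y_3$. Under this exchange the roles of the two components are swapped, so it suffices to prove that $(Y_2,\frac{1+\epsilon}{2}B_{Y_2}+D_{Y_2})$ is stable. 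Moreover, the ampleness of $K_{Y_2}+D_{Y_2}+\frac{1+\epsilon}{2}B_{Y_2}$ has already been verified in Example~\ref{typecsecondexample} (reducing to the intersection tables of Example~\ref{typecfirstexample}) by a purely numerical computation on the toric surface $Y_2$ which depends only on the divisor class of $B_{Y_2}$ and not on the positions of the points. Since collisions among the $\lambda_i$ do not change this class, ampleness persists for every configuration allowed by the hypotheses, and the whole content of the lemma is thus the log canonicity of the pair.

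To establish log canonicity I would analyze the singularities of $B_{Y_2}$, which is the union of the curve $C_0:\prod_{i=1}^4(x_0-t\lambda_i)+y_1^2\prod_{i=5}^8\lambda_i=0$ with the relevant coordinate line. Here the hypothesis $\lambda_5\lambda_6\lambda_7\lambda_8\neq0$ guarantees that the coefficient of $y_1^2$ is nonzero, so that $C_0$ is reduced and of the shape analyzed in Example~\ref{typecfirstexample}; geometrically this is exactly the condition ensuring that $\vec{\lambda}_0$ is not an unstable configuration (compare Remark~\ref{reasoningbehindrestrictionsonlambdas}). Since only the four roots $\lambda_1,\lambda_2,\lambda_3,\lambda_4$ enter $C_0$, I would enumerate their possible collisions up to the $S_4$-symmetry. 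Using the hypothesis that not all four coincide, the significant cases are $\lambda_1,\lambda_2,\lambda_3,\lambda_4$ distinct (where $C_0$ is smooth), $\lambda_1=\lambda_2$, $\lambda_1=\lambda_2,\lambda_3=\lambda_4$, and $\lambda_1=\lambda_2=\lambda_3$. In each case a direct local computation should show that $B_{Y_2}$ acquires only the planar singularities listed in Table~\ref{tbl:degptconf}, the worst being the $E_7$-type germ $y_1(x_0^3+y_1^2)=0$ produced by a triple collision, and that these all occur away from the conductor $D_{Y_2}$, along which $B_{Y_2}$ is transversal with no double point.

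The main point requiring care, exactly as in the previous two lemmas, is to confirm that every germ so produced is one of the log canonical types of Table~\ref{tbl:degptconf} for the coefficient $\frac{1+\epsilon}{2}$, and in particular that the excluded four-fold collision—which would yield the non-log-canonical singularity $y_1(x_0^4+y_1^2)=0$ treated in Example~\ref{typecfirstexample}—is precisely what the assumption that at least one of $\lambda_1,\lambda_2,\lambda_3,\lambda_4$ differs from the others rules out. I would also check that the $A_1$ surface singularities of $Y_2$ do not meet $B_{Y_2}\cup D_{Y_2}$ in a way that worsens the pair, and that the boundary line contained in $B_{Y_2}$ is smooth and meets both $C_0$ and $D_{Y_2}$ transversally. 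Granting these routine local verifications (which I would omit for brevity, as in the analogous lemmas), the pair $(Y_2,\frac{1+\epsilon}{2}B_{Y_2}+D_{Y_2})$ is log canonical, hence stable; combined with the symmetry reduction this proves the claim for both $Y_2$ and $Y_3$.
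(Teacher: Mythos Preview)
Your proposal is correct and follows essentially the same approach as the paper: reduce by symmetry to one component, note that ampleness is a numerical fact already established in the examples, and verify log canonicity by enumerating the allowed collisions among $\lambda_1,\lambda_2,\lambda_3,\lambda_4$ and checking that only the germs of Table~\ref{tbl:degptconf} appear, with transversality along the conductor. The one minor difference is that the paper performs the singularity check on the weighted blow-up exceptional divisor $F_0\cong\PP(1,1,2)$ \emph{before} the flip, invoking the observation that the flipping process creates no new singularities of the pair; you instead work directly on $Y_2$ after the flip and handle the two $A_1$ surface points there. Both are valid, but the paper's reduction to $F_0$ is slightly cleaner, since one need only observe that $C_0$ misses the single cone vertex of $\PP(1,1,2)$ rather than track two $A_1$ points introduced by the subsequent blow-ups and contractions.
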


\begin{proof}
To check that the pairs are stable, it is sufficient to check the singularities of the pairs corresponding to the exceptional divisors of the first two weighted blow-ups, because the flip, which was described in Example~\ref{typecfirstexample}, does not produce any new singularities on the curve. With reference to the notation introduced in Example~\ref{typecsecondexample}, we have to show that $(F_{0}, \frac{1+\epsilon}{2}(C_0+L_{1})+D_{F_0})$ and $(F_{1}, \frac{1+\epsilon}{2}(C_1+L_{0})+D_{F_1})$ are log canonical. We only show this for the former pair since the proof for the latter is analogous.

Note that from the equation of $C_0$ in Example~\ref{typecsecondexample}, it is clear that $C_0$ does not intersect two coordinate points $V(t, x_{0})$ and $V(t, y_{1})$. In particular, $C_{0}+L_{1}$ is supported  on the smooth locus of $F_0$. If all $\lambda_{i}$'s for $i \le 4$ are distinct, then $C_{0}$ is nonsingular and intersects with $L_{1}$ and $D_{F_0}$ transversally. By checking the allowed collisions of the $\lambda_{i}$ with $i \le 4$, one can conclude that $(C_0+L_{1})$ has only singularities on Table \ref{tbl:degptconf}, and it intersects with $D_{F_0}$ at three points transversally. Thus the pair is log canonical. 
\end{proof}

\begin{remark}
Let $\cX^0\rightarrow\Delta\setminus\{0\}$ be a proper flat family of smooth K3 surfaces with a purely non-symplectic automorphism of order four and a $U(2)\oplus D_4^{\oplus2}$ lattice polarization as in \S\,\ref{kondo'sconstruction}. Let $\cX$ be a semistable completion of $\cX^0$ over $\Delta$ with $\cX$ smooth and $K_{\cX}\sim0$ (see \cite{Kul77,PP81}). If the central fiber $\cX_0$ is not smooth, then $\cX_0$ has Kulikov type II according to \cite[Theorem II]{Kul77}. This follows from the study of degenerations we carried out in \S\,\ref{explicitcalculationofstablereplacement}, together with \cite[Theorem 7.4]{Sch16}.
\end{remark}

\begin{remark}
Motivated by the study of the KSBA compactification of the moduli space of K3 surfaces of degree two, in \cite{AT21}, Alexeev and Thompson introduced classes of combinatorially defined surfaces called $ADE$ surfaces and $\widetilde{A}\widetilde{D}\widetilde{E}$ surfaces \cite[\S\,3]{AT21}. These are log canonical non-klt del Pezzo surfaces with reduced boundary. An $ADE$ (resp. $\widetilde{A}\widetilde{D}\widetilde{E}$) cover is a double cover of an $ADE$ (resp. $\widetilde{A}\widetilde{D}\widetilde{E}$) surface.

The three K3 degenerations of \textsf{type a}, \textsf{b}, and \textsf{c} in this section are reducible surfaces whose irreducible components are of type II according to \cite[Lemma~2.5]{AT21}. With reference to the notation introduced in \cite[\S\,3A]{AT21}, before taking the $\ZZ_{2}$-cover, we have that for a degeneration of \textsf{type a} each irreducible component is a $\widetilde{D}_{8}$ surface. For a \textsf{type b} degeneration each irreducible component is an $\widetilde{E}_{7}$ surface. Finally, for a \textsf{type c} degeneration, the first weighted blow-up in Figure~\ref{case5} produces a component which is an $\widetilde{E}_{8}^{-}$ surface. The next four blow-ups/downs correspond to the `priming' operation in \cite[Definition~3.13]{AT21}. The resulting degeneration has two irreducible components which are $\widetilde{E}_{8}^{+}$ surfaces. 

A stable $ADE$ (resp. $\widetilde{A}\widetilde{D}\widetilde{E}$) cover is obtained by gluing $ADE$ (resp. $\widetilde{A}\widetilde{D}\widetilde{E}$) covers along boundaries. Alexeev and Thompson constructed moduli spaces of $ADE$ covers \cite[Theorem C]{AT21}. Moduli spaces of $\widetilde{A}\widetilde{D}\widetilde{E}$ covers, which are related to our situation, are not completely studied yet. 
\end{remark}

%---------------------------------------------------------------------------------------------------------------------------------------------------

\section{Proof of the main theorem}
\label{proofofmainthm}

We are ready to prove Theorem \ref{thm:mainthm}. Let $(\cY, \frac{1+\epsilon}{2}\cB) \to \bU$ as in the beginning of \S\,\ref{explicitcalculationofstablereplacement}. The first step is the following statement:

\begin{proposition}
\label{extendedfamilyonkirwan'sblowup}
There is a flat family of stable pairs $(\widetilde{\cY}, \frac{1+\epsilon}{2}\widetilde{\cB}) \to \bX_{1}^{s}$ which is an extension of $(\cY, \frac{1+\epsilon}{2}\cB) \to \bU$. 
\end{proposition}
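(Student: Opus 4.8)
The plan is to construct the extended family $(\widetilde{\cY}, \frac{1+\epsilon}{2}\widetilde{\cB}) \to \bX_1^s$ by first pulling back the naive constant family and then modifying it along the exceptional locus of Kirwan's blow-up, using the explicit stable replacements computed in \S\ref{explicitcalculationofstablereplacement}. Recall that $\bX_1^s = \bX_1$ is the blow-up of $((\PP^1)^8)^{ss}$ along $\Delta_{4,4}$ (after discarding the unstable proper transforms $\widetilde{\Delta}_I$), so it is stratified by the open locus $\bU/$(lifts) together with the exceptional divisors $E_{I,I^c}$ over the \textsf{type a}, \textsf{type b}, and \textsf{type c} components. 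The idea is that the modification is \emph{local} over $\bX_1^s$: away from the $E_{I,I^c}$ the constant family $\cY_1 = \bX_1^s \times P$ with its pulled-back divisor $\cB_1$ is already a family of stable pairs by the Lemma in \S\ref{stablepairsandKSBAcompactification}, and over a neighborhood of each exceptional divisor we replace it by the birational models produced in Examples~\ref{typeasecondexample}, \ref{typebsecondexample}, and \ref{typecsecondexample}.

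\textbf{Step 1 (Local models along the exceptional divisors).} First I would upgrade each of the three one-parameter computations to a computation over an open subset of the relevant exceptional divisor. Concretely, near a point of $E_{I,I^c}$ the transverse slice to $\Delta_{I,I^c}$ gives exactly the parameter $t$ of the examples, while the remaining directions move $\vec\lambda$ inside the stable stratum of the exceptional divisor. Lemmas~\ref{lem:typealc}, \ref{lem:typeblc}, and \ref{lem:typeclc} already guarantee that for every such admissible $\vec\lambda$ the resulting central fibers $(E_i,\frac{1+\epsilon}{2}(C_i+\cdots)+D_{E_i})$ (resp.\ $(Y_j,\dots)$) are stable pairs. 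The blow-ups/weighted blow-ups and the subsequent flips and contractions described in those examples can all be performed in families over the base directions, since the centers are cut out by the vanishing of the $x_0,x_1,y_0,y_1$ coordinates and the parameter $t$, which vary algebraically. This produces, over an open neighborhood $V_{I,I^c}\subseteq\bX_1^s$ of each exceptional divisor, a family of stable pairs extending $(\cY,\frac{1+\epsilon}{2}\cB)$ restricted to $V_{I,I^c}\cap\bU$.

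\textbf{Step 2 (Gluing and flatness).} Next I would glue the local families over the $V_{I,I^c}$ to the constant family over $\bX_1^s\setminus\bigcup E_{I,I^c}$. Since the $\Delta_{I,I^c}$ are disjoint (as noted in \S\ref{ssec:P18}), the exceptional divisors $E_{I,I^c}$ are disjoint, so the local modifications do not interfere and gluing reduces to checking that each local model agrees with $\cY_1$ over the overlap $V_{I,I^c}\cap\bU$ — which it does by construction, as all the modifications are isomorphisms over $t\neq0$. Flatness of the glued family follows because the geometric fibers all have the same Hilbert polynomial: they are stable pairs of dimension $2$ with $(K_X+\frac{1+\epsilon}{2}B)^2$ constant (the numerical invariant is preserved under the stable replacement, as the intersection computations in the examples confirm), and a family over a reduced base with constant fiber dimension and invariants, whose total space is obtained by these birational surgeries, is flat by the usual criterion (e.g.\ constancy of Hilbert polynomial over a reduced base). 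The divisor $\widetilde{\cB}$ is flat over the base for the same reason.

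\textbf{The hardest part} will be Step 2, specifically verifying that the family-over-$t$ stable replacements of the examples genuinely glue to a family that is \emph{flat} and whose fibers are the asserted stable limits over the \emph{entire} exceptional divisor, rather than just over the one-parameter slices. The subtlety is that the flipping and contraction operations in Examples~\ref{typebsecondexample} and \ref{typecsecondexample} are not blow-ups of smooth centers and could in principle behave non-uniformly as $\vec\lambda$ degenerates within the exceptional stratum; one must confirm that the contracted loci and the log canonical models deform flatly. I would address this by invoking the separatedness and properness of Viehweg's moduli functor $\overline{\cV}$ (equivalently $\overline{\cT}$): the family over $\bU$ already defines a map $\bU\to\overline{\cT}$, and the local stable replacements show that this map extends continuously over each exceptional divisor with the prescribed limits. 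Because $\overline{\bJ}$ (hence $\overline{\bK}$, by Proposition~\ref{compactificationsarethesame}) is a separated coarse space, the extension is unique where it exists, so the locally constructed families are forced to agree on overlaps and assemble into the desired global family $(\widetilde{\cY},\frac{1+\epsilon}{2}\widetilde{\cB})\to\bX_1^s$. This simultaneously yields the functorial morphism $\bX_1^s\to\overline{\bK}$ that will be used in the rest of \S\ref{proofofmainthm}.
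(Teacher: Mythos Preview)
Your overall strategy matches the paper's: pull back the constant family to $\bX_1^s$, then modify it over the exceptional divisors $E_{I,I^c}$ using the explicit stable replacements of \S\ref{explicitcalculationofstablereplacement}, with Lemmas~\ref{lem:typealc}, \ref{lem:typeblc}, \ref{lem:typeclc} certifying stability of the new fibers. The execution, however, differs. The paper never works locally and glues. Instead it defines \emph{global} smooth centers inside $\cY_1 = \bX_1^s \times (\PP^1\times\PP^1)$, namely
\[
\cZ_I^a = \pi_1^{-1}(E_{I,I^c}^s)\cap V(b_ix_0-a_ix_1\mid i\in I),\qquad
\cZ_I^b = \pi_2^{-1}(E_{I,I^c}^s)\cap \cL_1\cap V(b_ix_0-a_ix_1\mid i\in I),
\]
and analogously for \textsf{type c}, and performs a single sequence of global (weighted) blow-ups $\cY_4\to\cdots\to\cY_1$. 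The contractions are then obtained in one stroke by running the \emph{relative} log MMP for $(\cY_4,\tfrac{1+\epsilon}{2}\cB_4)$ over $\bX_1^s$. This buys exactly what you flag as the ``hardest part'': the relative MMP automatically produces a flat projective family whose fibers are the log canonical models, so there is no need to verify separately that the flips and contractions deform uniformly in $\vec\lambda$, nor to check a Hilbert-polynomial criterion, nor to glue.

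Your fallback argument via separatedness of $\overline{\cT}$ does not quite close the gap you identify: separatedness forces agreement of \emph{maps to the coarse moduli space} on overlaps, but Proposition~\ref{extendedfamilyonkirwan'sblowup} asserts the existence of an actual family $(\widetilde{\cY},\tfrac{1+\epsilon}{2}\widetilde{\cB})\to\bX_1^s$, and a map to a coarse space does not in general lift to a family. Your direct gluing argument in Step~2 is what really carries the weight, and it is correct (the blow-up centers are global, the modifications are isomorphisms away from the $E_{I,I^c}$, and the $E_{I,I^c}$ are pairwise disjoint), but once you observe that the centers are globally defined you may as well blow up globally and skip the gluing altogether---which is precisely what the paper does.
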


\begin{proof}
We subdivide the proof in different parts.

\textbf{Setup and notation:} Let $([a_{1}:b_{1}],\ldots,[a_{8}:b_{8}])$ be the homogeneous coordinates of $\bX_{0} := ((\PP^{1})^{8})$. Let $\cY_{0} := \bX_{0}\times(\PP^{1} \times \PP^{1})$ and let $\pi_{0} : \cY_{0} \to \bX_{0}$ be the natural projection. For $\PP^{1} \times \PP^{1}$, let $([x_{0}:x_{1}]$, $[y_{0}:y_{1}])$ be the homogeneous coordinates. 

As usual, let $\cC$ be the divisor on $\cY_{0}$ defined by
\[
	y_{0}^{2}\prod_{i=1}^{4}(b_{i}x_{0} - a_{i}x_{1})
	+ y_{1}^{2}\prod_{i=5}^{8}(b_{i}x_{0} - a_{i}x_{1}) = 0.
\]
Let $\cL_{0} := V(y_{0}),\cL_{1}:=V(y_{1}),\cB_{0} := \cC + \cL_{0} + \cL_{1}$. Then $(\cY_{0},\frac{1+\epsilon}{2}\cB_{0})$ is a family of pairs over $\bX_{0}$. Note that these pairs are stable over $\bX_{0}^{s}$. 

Recall that for $I \in {[8] \choose 4}$, $\Delta_{I, I^{c}} := \Delta_{I} \cap \Delta_{I^{c}}$ is a connected component of the locus of strictly semi-stable points with closed orbits. Consult \S\,\ref{ssec:P18} for the detail. We say that $\Delta_{I, I^{c}}$ or the exceptional divisor $E_{I, I^{c}}$ is of \textsf{type a} if $\Delta_{I, I^{c}}$ parametrizes \textsf{type a} point configurations. In the same way we define \textsf{type b} and \textsf{type c} components. 

Let $\bX_{1} \to \bX_{0}^{ss}$ be the blow-up along $\cup \Delta_{I, I^{c}}$ and let $\rho_{1}$ be the composition $\bX_{1}^{s} \hookrightarrow \bX_{1} \to \bX_{0}^{ss}$. Let $(\cY_{1},\frac{1+\epsilon}{2}\cB_{1})$ be the pulled-back family over $\bX_{1}^{s}$, that is, $\cY_{1} :=  \bX_{1}^{s}\times(\PP^{1} \times \PP^{1})$ and $\cB_{1} := \cC_{1} + \cL_{0, 1} + \cL_{1, 1} := (\rho_{1}\times \mathrm{id})^{*}(\cC + \cL_{0} + \cL_{1})|_{\bX_0^{ss}\times(\PP^1\times\PP^1)}$. Let $\pi_{1} : \cY_{1} \to \bX_{1}^{s}$ be the natural projection. 

\textbf{Main idea:} The claimed extension $(\widetilde{\cY}, \frac{1+\epsilon}{2}\widetilde{\cB}) \to \bX_{1}^{s}$ is obtained from $(\cY_{1},\frac{1+\epsilon}{2}\cB_{1}) \to \bX_{1}^{s}$ by first appropriately blowing-up $\cY_{1}$, and then applying the relative minimal model program. It is clear that the fibers of $(\cY_{1},\frac{1+\epsilon}{2}\cB_{1}) \to \bX_{1}^{s}$ that are not stable lie above the divisors $E_{I,I^c}^s$. In what follows, we blow-up appropriate sub-loci of $\pi_1^{-1}(E_{I,I^c}^s)$ mimicking in a relative setting the blow-ups described in \S\,\ref{explicitcalculationofstablereplacement}.

\textbf{\textsf{Type a} modification:} For a \textsf{type a} divisor $E_{I, I^{c}}$, let $\cZ_{I}^{a} := \pi_{1}^{-1}(E_{I, I^{c}}^{s}) \cap V(b_{i}x_{0} - a_{i}x_{1}\mid i\in I)$. Note that we intentionally abuse our notation: it is understood that the vanishing locus $V(b_{i}x_{0} - a_{i}x_{1}\mid i\in I)\subseteq\cY_0$ is pulled-back to $\cY_1$ under the appropriate morphism. This is a smooth codimension two subvariety of $\cY_{1}$, which is a $\PP^{1}$-bundle over $E_{I, I^{c}}^{s}$. Note that $\cZ_{I^{c}}^{a}$ is also in $\pi_{1}^{-1}(E_{I, I^{c}}^{s})$ but it is disjoint from $\cZ_{I}^{a}$. Let $\cY_{2} \to \cY_{1}$ be the blow-up along $\cup(\cZ_{I}^{a} \sqcup \cZ_{I^{c}}^{a})$, where the union is over all \textsf{type a} divisors $E_{I, I^{c}}$. Fiberwisely, this is exactly the blow-up of the two double lines as in Example~\ref{typeasecondexample}. Let $\cC_{2}$ (resp. $\cL_{i, 2}$) be the proper transform of $\cC_{1}$ (resp. $\cL_{i, 1}$) and $\cB_{2} := \cC_{2} + \cL_{0, 2} + \cL_{1, 2}$. Let $\pi_{2}$ be the composition $\cY_{2} \to \cY_{1} \to \bX_{1}^{s}$. Then $\pi_{2} : (\cY_{2}, \frac{1+\epsilon}{2}\cB_{2}) \to \bX_{1}^{s}$ is a flat family of pairs. Over a \textsf{type a} divisor $E_{I, I^{c}}^{s}$, each fiber of $(\cY_{2}, \frac{1+\epsilon}{2}\cB_{2})$ is semi-log canonical and it is isomorphic to the pair in Figure \ref{fig:ssreplacementtypea}. 

\textbf{\textsf{Type b} modification:} Over a \textsf{type b} divisor $E_{I, I^{c}}$, we may assume that $|I \cap \{1,\ldots,4\}| = 3$ (hence $|I^{c} \cap \{5, 6, 7, 8\}| = 3$). Let $\cZ_{I}^{b} := \pi_{2}^{-1}(E_{I, I^{c}}^{s}) \cap \cL_{1} \cap V(b_{i}x_{0} - a_{i}x_{1}\mid i \in I)$ and let $\cZ_{I^{c}}^{b} := \pi_{2}^{-1}(E_{I, I^{c}}^{s}) \cap \cL_{0} \cap V(b_{i}x_{0} - a_{i}x_{1}\mid i \in I^{c})$. Then $\cZ_{I}^{b}$ and $\cZ_{I^{c}}^{b}$ are both disjoint sections of $E_{I, I^{c}}^{s}$. Let $\cY_{3} \to \cY_{2}$ be the blow-up along $\cup (\cZ_{I}^{b} \sqcup \cZ_{I^{c}}^{b})$ for all \textsf{type b} divisors $E_{I, I^{c}}^{s}$. Let $\cC_{3}$ (resp. $\cL_{i, 3}$) be the proper transform of $\cC_{2}$ (resp. $\cL_{i, 2}$) and let $\cB_{3} := \cC_{3} + \cL_{0, 3} + \cL_{1, 3}$. Let $\pi_{3}$ be the composition $\cY_{3} \to \cY_{2} \to \bX_{1}^{s}$. Then $\pi_{3} : (\cY_{3}, \frac{1+\epsilon}{2}\cB_{3}) \to \bX_{1}^{s}
$ is a family of pairs and over a \textsf{type b} divisor, each fiber of $(\cY_{3}, \frac{1+\epsilon}{2}\cB_{3})$ is semi-log canonical and it is isomorphic to the pair in Figure \ref{fig:ssreplacementtypeb}. 

\textbf{\textsf{Type c} modification:} Finally, over the \textsf{type c} divisor $E_{I, I^{c}}$, we may assume that $I = \{1, 2, 3, 4\}$. Let $\cP := \pi_{3}^{-1}(E_{I, I^{c}}^{s}) \cong E_{I, I^{c}}^{s} \times \PP^{1} \times \PP^{1}$. Let $\cZ_{I}^{c} := \pi_{3}^{-1}(E_{I, I^{c}}^{s}) \cap \cL_{1} \cap V(b_{i}x_{0} - a_{i}x_{1}\mid i\in I)$ and $\cZ_{I^{c}}^{c} := \pi_{3}^{-1}(E_{I, I^{c}}^{s}) \cap \cL_{0} \cap V(b_{i}x_{0} - a_{i}x_{1}\mid i \in I^{c})$. Then $\cZ_{I}^{c}$ and $\cZ_{I^{c}}^{c}$ are disjoint sections of $E_{I, I^{c}}^{s}$. Let $\cY_{4}' \to \cY_{3}$ be the weighted blow-up along $\cup (\cZ_{I}^{c} \sqcup \cZ_{I^{c}}^{c})$ where the normal subbundles $\cN_{\cZ_{I}^{c}/\cL_{1, 3}}$ and $\cN_{\cZ_{I^{c}}^{c}/\cL_{0, 3}}$ have weight two. Let $\cC_{4}'$ (resp. $\cL_{i, 4}'$) be the proper transform of $\cC_{3}$ (resp. $\cL_{i, 3}$). Set $\cB_{4}' := \cC_{4}' + \cL_{0, 4}' + \cL_{1, 4}'$. Let $\cP'$ be the proper transform of $\cP$. 

Let $\cW_{i} := \cL_{i, 4}' \cap \cP'$, which is a smooth codimension two subvariety of $\cY_{4}'$. Note that $\cW_{0}$ and $\cW_{1}$ are disjoint. Let $\cY_{4}'' \to \cY_{4}'$ be the blow-up along $\cW_{0} \sqcup \cW_{1}$. The two exceptional divisors are denoted by $E_{0}$ and $E_{1}$. Let $\cC_{4}''$, $\cL_{i, 4}''$ be the proper transforms of $\cC_{4}'$, $\cL_{i, 4}'$ respectively. Set $\cB_{4}'' := \cC_{4}'' + \cL_{0, 4}'' + \cL_{1, 4}''$. Note that $\cC_{4}'' \cong \cC_{4}'$ because $\cC_{4}'$ and $\cW_{i}$ are disjoint. 

Let $\cV_{i} := \cL_{i, 4}'' \cap E_{i}$, which is a smooth codimension two subvariety of $\cY_{4}''$. $\cV_{0}$ is disjoint from $\cV_{1}$. Let $\cY_{4} \to \cY_{4}''$ be the blow-up along $\cV_{0} \sqcup \cV_{1}$. Let $\cC_{4}$, $\cL_{i, 4}$ be the proper transforms of $\cC_{4}''$, $\cL_{i, 4}''$ respectively and let $\cB_{4} := \cC_{4} + \cL_{0, 4} + \cL_{1, 4}$. The family of pairs $\pi : (\cY_{4}, \frac{1+\epsilon}{2}\cB_{4}) \to \bX_{1}^{s}$ is semi-log canonical over the \textsf{type c} divisor. Over $E_{I, I^{c}}^{s}$, the fiber of $\cY_{4}$ has seven irreducible components. There is a `central' component, and two `tails' consisting of three irreducible components whose configurations are the same to the top three components of the fourth step in Figure \ref{case5}. 

\textbf{Contraction:} Consider the resulting family of pairs $\pi : (\cY_{4}, \frac{1+\epsilon}{2}\cB_{4}) \to \bX_{1}^{s}$. For each $x \in \bX_{1}^{s}$, the fiber $(\cY_{4 x}, \frac{1+\epsilon}{2}\cB_{4 x})$ is either irreducible stable pair or one of semi-stable pairs described in Figures~\ref{fig:ssreplacementtypea}, \ref{fig:ssreplacementtypeb}, and \ref{fig:sslimit6}. In any case, each fiber is a semi-log canonical pair and $K_{\cY_{4 x}} + \frac{1+\epsilon}{2}\cB_{4 x}$ is a nef and big divisor. Thus, by \cite[Theorem~1.10]{Fuj14}, $\rH^{1}(\cY_{4 x},\cO(m(K_{\cY_{4 x}}+\frac{1+\epsilon}{2}\cB_{4 x}))) = 0$ for $m \gg 0$. Moreover, by \cite[Theorem~1.16]{Fuj14}, $K_{\cY_{4 x}}+ \frac{1+\epsilon}{2}\cB_{4 x}$ is semi-ample. Then by the standard cohomology and base change \cite[Theorem~12.11]{Har77}, $\pi_{*}\cO(m(K_{\cY_{4}/\bX_{1}^{s}}+\frac{1+\epsilon}{2}\cB_{4}))$ is locally free and we obtain a new family of varieties 
\[
	\widetilde{\cY} := \mathrm{Proj}\; \bigoplus_{m \ge 0}\pi_{*}\cO\left(m\left(K_{\cY_{4}/\bX_{1}^{s}}+\frac{1+\epsilon}{2}\cB_{4}\right)\right)\to \bX_{1}^{s}, 
\]
and there is a contraction $\bX_{1}^{s}$-morphism $\cY_{4} \to \widetilde{\cY}$. By taking the push-forward of $\cB_{4}$, we obtain $(\widetilde{\cY}, \frac{1+\epsilon}{2}\widetilde{\cB}) \to \bX_{1}^{s}$. The resulting morphism $\widetilde{\cY} \to \bX_{1}^{s}$ is flat by \cite[Lemma~10.12]{HKT09}. Then by \cite[Theorem~4.3]{Kol18}, $(\widetilde{\cY}, \frac{1+\epsilon}{2}\widetilde{\cB}) \to \bX_{1}^{s}$ is a well-defined family of pairs. Finally, Lemmas~\ref{lem:typealc}, \ref{lem:typeblc}, and \ref{lem:typeclc} tell us that each fiber of $(\widetilde{\cY}, \frac{1+\epsilon}{2}\widetilde{\cB}) \to \bX_{1}^{s}$ is stable.
\end{proof}

\begin{proof}[Proof of Theorem \ref{thm:mainthm}]
In Proposition~\ref{compactificationsarethesame} we showed that $\overline{\bK}\cong\overline{\bJ}$, so we focus on the latter. By Proposition~\ref{extendedfamilyonkirwan'sblowup} we have a family of stable pairs over $\bX_{1}^{s}$ for the functor $\overline{\cJ}'$ in Definition~\ref{stackforbasestablepairs}, hence there is a functorial morphism $f : \bX_{1}^{s} \to \overline{\bJ}$. Since $\bU$ is an open dense subset of $\bX_{1}^{s}$, the image of $f$ is precisely $\overline{\bJ}$. Clearly the morphism $f$ is $\SL_{2}$-invariant, so there is a quotient morphism $\bar{f} : \bP = \bX_{1}^{s}/\SL_{2} \to \overline{\bJ}$. This map is also $H$-invariant, thus we can obtain yet another quotient map $\widetilde{f} : \bP/H \to \overline{\bJ}$ which we show is an isomorphism. 

Since $\widetilde{f}$ is a birational morphism between normal varieties, it is enough to show it is finite, or equivalently that $\bar{f}$ is finite. Given an exceptional divisor $E_{I,I^c}$ of the blow-up $\bX_1\rightarrow\bX_0^{ss}$, denote by $\overline{E}_{I,I^c}$ the quotient $E_{I,I^c}^s/\SL_2$. It is sufficient to show that $\overline{E}_{I,I^c}$, which is isomorphic to $\PP^2\times\PP^2$ by Remark \ref{rmk:modulimeaningofP}, is not contracted by $\bar{f}$. This is explained in Lemma~\ref{finaltechnicaldetail} below.
\end{proof}

\begin{lemma}
\label{finaltechnicaldetail}
With the notation introduced in the proof of Theorem~\ref{thm:mainthm}, the divisors $\overline{E}_{I,I^c}\subseteq\bP$ are not contracted by $\bar{f}$.
\end{lemma}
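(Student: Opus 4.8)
The plan is to show that the restriction $\bar{f}|_{\overline{E}_{I,I^c}}$ is generically finite onto its image, i.e. that its image is again $4$-dimensional; since $\overline{E}_{I,I^c}\cong\PP^2\times\PP^2$ has dimension $4$, this is equivalent to showing that the fibers of $\bar{f}|_{\overline{E}_{I,I^c}}$ are finite. Granting this, $\overline{E}_{I,I^c}$ cannot be contracted, and combined with the fact that $\bar{f}$ is an isomorphism over the open locus $\bU/\SL_2$ (so that the only divisors that could be contracted are the $\overline{E}_{I,I^c}$), this is exactly what is needed to conclude finiteness of $\bar{f}$ in the proof of Theorem~\ref{thm:mainthm}. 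So I would fix a divisor $\overline{E}_{I,I^c}$ of a given type and reduce everything to finiteness of the fibers of $\bar{f}$ along it.

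The heart of the argument is the explicit description of the stable pairs parametrized by $\overline{E}_{I,I^c}$ from \S\ref{explicitcalculationofstablereplacement}. By Remark~\ref{rmk:modulimeaningofP}(2) we have $\overline{E}_{I,I^c}\cong\overline{\rM}_{0,\left(1,\left(\frac14+\epsilon\right)^4\right)}\times\overline{\rM}_{0,\left(1,\left(\frac14+\epsilon\right)^4\right)}$, and each factor records the limiting configuration of one of the two blocks of four colliding points. I would match these two configurations with the boundary curves of the limit surface produced by $\bar{f}$. In the \textsf{type a} case (Example~\ref{typeasecondexample}) the stable surface is $E_0\cup E_1$ and the two $(2,2)$-curves $C_0,C_1$ read off there encode $\{\lambda_1,\lambda_2,\lambda_5,\lambda_6\}$ and $\{\lambda_3,\lambda_4,\lambda_7,\lambda_8\}$ respectively as point configurations on $\PP^1$; the analogous statements hold for \textsf{type b} via the quartic branch curves $C_0,C_1$ on the two copies of $\PP^2$ (Example~\ref{typebsecondexample}) and for \textsf{type c} via $C_0,C_1$ on $Y_2,Y_3$ (Example~\ref{typecsecondexample}).

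I would then argue that the isomorphism class of the limit pair determines these two configurations up to finite ambiguity: normalizing the surface recovers its irreducible components (possibly interchanging the two, which is a finite choice), and from the boundary curve together with its conductor and the two rulings on each component one reconstructs the four marked points and their extra weight-one node point, namely the corresponding point of $\overline{\rM}_{0,\left(1,\left(\frac14+\epsilon\right)^4\right)}\cong\PP^2$, up to the finitely many automorphisms coming from relabeling and from the residual symmetry of the marked rational curve. Hence two general points of $\overline{E}_{I,I^c}$ representing distinct configurations yield non-isomorphic stable pairs, the fibers of $\bar{f}|_{\overline{E}_{I,I^c}}$ are finite, and $\overline{E}_{I,I^c}$ is not contracted.

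The main obstacle I anticipate is making this reconstruction rigorous: one must quotient out the residual $\CC^*$ that was used to build $\overline{E}_{I,I^c}$ and verify that reading the configuration off the boundary curves of the \emph{abstract} stable pair is genuinely finite-to-one rather than merely dominant. This is most delicate for \textsf{type c}, whose stable model is obtained only after the sequence of weighted blow-ups and flips in Figure~\ref{case5}, so that the dictionary between its boundary data and the two $\PP^2$ factors is least transparent; there I would rely on the explicit equations of $C_0,C_1$ in Example~\ref{typecsecondexample} to confirm that the colliding $\lambda_i$ still appear as moduli of the limit.
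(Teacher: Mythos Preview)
Your proposal is correct and would prove the lemma (in fact, the stronger statement that $\bar{f}|_{\overline{E}_{I,I^c}}$ has finite fibers, which is what is really used for finiteness of $\bar f$ in Theorem~\ref{thm:mainthm}). It does, however, take a different route from the paper.

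The paper does not attempt a full reconstruction. It argues by contradiction: if $\overline{E}_{I,I^c}\cong\PP^2\times\PP^2$ were contracted, then one of the two $\PP^2$ factors would be sent to a point, so any two points $(\bar p_1,\bar q),(\bar p_2,\bar q)$ in the same slice $\PP^2\times\{\bar q\}$ would parametrize isomorphic stable pairs. To contradict this, the paper produces a \emph{single numerical invariant} of the limit pair that varies along the slice. For \textsf{type~a} this is the cross-ratio of the four points $[\lambda_1:1],[\lambda_2:1],[\lambda_5:1],[\lambda_6:1]$ on the $[x_0:t]$-line, obtained by projecting $C_0\cap(L_0\cup L_1)$; for \textsf{type~b} the cross-ratio of $[\lambda_1:1],[\lambda_2:1],[\lambda_3:1],[1:0]$ on $L_1$ (the last point being $L_1\cap D_{E_0}$); for \textsf{type~c} the cross-ratio of $[\lambda_1:1],\ldots,[\lambda_4:1]$ on the curve $T$ of Figure~\ref{case5}. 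Choosing two slices with different cross-ratios finishes the argument.

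In short: you recover the entire point of $\overline{\rM}_{0,(1,(\frac14+\epsilon)^4)}$ from the limit pair, whereas the paper only extracts one cross-ratio. Your version is cleaner conceptually and directly yields finite fibers, but requires the bookkeeping you flag at the end (especially for \textsf{type~c} after the flip). The paper's version is quicker---one invariant suffices---at the cost of the somewhat informal reduction ``contracted $\Rightarrow$ one $\PP^2$ factor collapses to a point''.
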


\begin{proof}
If $\overline{E}_{I,I^c}\cong\PP^2\times\PP^2$ is contracted, then at least one of the two copies of $\PP^2$, say the first component, is contracted to a point. We show that we can find two points $(\bar{p}_1,\bar{q}),(\bar{p}_2,\bar{q})\in \overline{E}_{I,I^c}$ with $\bar{p}_1\neq\bar{p}_2$ parametrizing non-isomorphic stable pairs, obtaining a contradiction. There are three cases to consider corresponding to the type of $\overline{E}_{I,I^c}$, which we define to be equal to the type of $E_{I,I^c}$.

Assume $\overline{E}_{I,I^c}$ is of \textsf{type a}. Up to relabeling, we may assume $I=\{1,2,5,6\},I^c=\{3,4,7,8\}$. The stable pair parametrized by a point in $E_{I,I^c}^s$ has two irreducible components isomorphic to $\PP^1\times\PP^1$ (see Example~\ref{typeasecondexample}). Consider the irreducible component with divisor in the form $C_0+L_0+L_1$, where $C_0$ is given by
\begin{equation*}
	y_{0}^{2}(x_{0} - t\lambda_{1})(x_{0} - t\lambda_{2})
	\lambda_{3}\lambda_{4} + 
	y_{1}^{2}(x_{0} - t\lambda_{5})(x_{0} -t\lambda_{6})
	\lambda_{7}\lambda_{8} = 0.
\end{equation*}
Recall from Remark~\ref{reasoningbehindrestrictionsonlambdas} that $\lambda_3\lambda_4\lambda_7\lambda_8\neq0$ and at least one of $\lambda_1,\lambda_2,\lambda_5,\lambda_6$ is different from the others. So pick any point $(p_1,q)\in E_{I,I^c}^s$ such that the corresponding $\lambda_1,\lambda_2,\lambda_5,\lambda_6$ are distinct. Consider the projection from $\PP^1\times\PP^1$ on the $[x_0:t]$ coordinate. The images of the four points $C_0\cap L_0,C_0\cap L_1$ are $[\lambda_1:1],[\lambda_2:1],[\lambda_5:1],[\lambda_6:1]$, which are distinct points on $\PP^1$. Denote by $\beta$ their cross-ratio. Choose $[\mu_1:1],[\mu_2:1],[\mu_5:1],[\mu_6:1]$ distinct points on $\PP^1$ such that the corresponding cross-ratio is different from $\beta$. Then the stable pair obtained by replacing $\lambda_1,\lambda_2,\lambda_5,\lambda_6$ with $\mu_1,\mu_2,\mu_5,\mu_6$ respectively and keeping $\lambda_3,\lambda_4,\lambda_7,\lambda_8$ unchanged is parametrized by a point $(p_2,q)\in E_{I,I^c}^s$ with $p_1\neq p_2$. The images $(\bar{p}_1,\bar{q}),(\bar{p}_2,\bar{q})$ in $\overline{E}_{I,I^c}$ are also distinct because the cross-ratio is $\SL_2$-invariant, showing what we needed.

The cases of \textsf{type b} and \textsf{type c} are handled similarly, but with the following differences. For \textsf{type b}, given a stable pair parametrized by $E_{I,I^c}^s$, one can consider the cross-ratio of the four points on $L_1$ given by $[\lambda_1:1],[\lambda_2:1],[\lambda_3:1],[1:0]$, where the last point is the intersection of $L_1$ with the conductor divisor. For \textsf{type c}, look at $[\lambda_1:1],[\lambda_2:1],[\lambda_3:1],[\lambda_4:1]$ on the curve $T$ in Figure~\ref{case5}.
\end{proof}

\bibliographystyle{alpha}

\end{document}